\DeclareSymbolFontAlphabet{\mathbb}{AMSb} 
\DeclareSymbolFontAlphabet{\mathbbl}{bbold}
\newcommand{\Prism}{{\mathlarger{\mathbbl{\Delta}}}}
\titleformat{\section}{\centering\large\sc}{\thesection.}{0.7em}{}
\begin{document}
\theoremstyle{plain}
\newtheorem{thm}{Theorem}[section]
\newtheorem{theorem}[thm]{Theorem}

\numberwithin{equation}{thm}
\newtheorem*{thm*}{Theorem}
\newtheorem*{cor*}{Corollary}
\newtheorem*{thma}{Theorem A}
\newtheorem*{thmb}{Theorem B}
\newtheorem*{mthm}{Main Theorem}
\newtheorem*{mcor}{Theorem \ref{maincor}}
\newtheorem{thms}[thm]{Theorems}
\newtheorem{prop}[thm]{Proposition}
\newtheorem{proposition}[thm]{Proposition}
\newtheorem{prodef}[thm]{Proposition and Definition}
\newtheorem{lemma}[thm]{Lemma}
\newtheorem{lem}[thm]{Lemma}
\newtheorem{cor}[thm]{Corollary}

\newtheorem{corollary}[thm]{Corollary}
\newtheorem{claim}[thm]{Claim}
\newtheorem*{claim*}{Claim}

\theoremstyle{definition}
\newtheorem{defn}[thm]{Definition}
\newtheorem{construction}[thm]{Construction}
\newtheorem{hypothesis}[thm]{Hypothesis}
\newtheorem{defnProp}[thm]{Definition and Proposition}
\newtheorem{prob}[thm]{Problem}
\newtheorem{question}[thm]{Question}
\newtheorem*{setup}{Set up}
\newtheorem{warning}[thm]{Warning}
\newtheorem{definition}[thm]{Definition}
\newtheorem{ex}[thm]{Example}
\newtheorem{exercise}[thm]{Exercise}
\newtheorem{Example}[thm]{Example}
\newtheorem{example}[thm]{Example}
\newtheorem*{acknowledgements}{Acknowledgements}
\newtheorem*{data}{Data availability}
\newtheorem*{conflict}{Conflict of interest}
\newtheorem{fact}[thm]{Fact}
\newtheorem*{Fact*}{Fact}
\newtheorem{conj}[thm]{Conjecture}
\newtheorem{ques}[thm]{Question}
\newtheorem*{quesa}{Question A}
\newtheorem*{quesb}{Question B}
\newtheorem{case}{Case}
\newtheorem{setting}[thm]{Setting}
\newtheorem{notation}[thm]{Notation}
\newtheorem{rem}[thm]{Remark}
\newtheorem{note}[thm]{Note}
\newtheorem{remark}[thm]{Remark}

\theoremstyle{remark}
\newtheorem*{lemm}{Lemma}
\newtheorem*{pf}{{\sl Proof}}
\newtheorem*{pfpr}{Proof of Proposition \eqref{positive Hecke}}
\newtheorem*{tpf}{{\sl Proof of Theorem 1.1}}
\newtheorem*{cpf1}{{\sl Proof of Claim 1}}
\newtheorem*{cpf2}{{\sl Proof of Claim 2}}

\def\soc{\operatorname{Soc}}
\def\xx{\text{{\boldmath$x$}}}
\def\L{\mathrm{U}}
\def\Z{\mathcal{Z}}
\def\D{\mathcal{D}}
\def\X{\mathcal{X}}
\def\XX{\mathbb{X}}
\def\KK{\mathbb{K}}
\def\bbZ{\mathbb{Z}}
\def\LL{\mathbb{U}}
\def\E{\operatorname{E}}
\def\H{\operatorname{H}}

\def\G{{\sf G}}
\def\T{{\sf T}}

\def\a{\mathfrak a}
\def\b{\mathfrak b}
\def\c{\mathfrak c}
\def\e{\mathrm{e}}
\def\f{\mathfrak{f}}
\def\m{\mathfrak m}
\def\n{\mathfrak n}
\def\p{\mathfrak p}
\def\q{\mathfrak q}
\def\P{\mathfrak P}
\def\Q{\mathfrak Q}
\def\C{\mathcal{C}}
\def\K{\mathrm{K}}
\def\H{\mathrm{H}}
\def\J{\mathrm{J}}
\def\FC{\mathrm F}
\def\Var{\mathrm V}
\def\V{\mathrm V}
\def\r{\mathrm{r}}

\newcommand{\M}{\mathcal{M}}
\newcommand{\N}{\mathcal{N}}

\newcommand{\fkS}{\mathfrak{S}}

\newcommand{\fkZ}{\mathfrak{Z}}

\def\Sp{\operatorname{Spec}}
\def\Spf{\operatorname{Spf}}
\def\Spa{\operatorname{Spa}}

\def\id{\operatorname{id}}
\def\gr{{\rm gr}}

\def\A{{\mathcal A}}
\def\B{{\mathcal B}}
\def\F{{\mathcal F}}
\def\Y{{\mathcal Y}}
\def\W{{\mathcal W}}
\def\M{{\mathcal M}}
\def\N{{\mathcal N}}
\def\O{{\mathcal O}}
\def\H{{\mathcal H}}
\def\G{{\mathcal G}}
\def\R{{\mathcalR}}
\def\I{{\mathcal I}}
\def\L{{\mathcal L}}
\def\U{{\mathcal U}}
\def\V{{\mathcal V}}
\def\fM{\mathfrak M}
\def\fN{\mathfrak N}
\def\fF{\mathfrak F}
\def\T{\mathcal T}
\def\fl{\pi^\flat}
\def\E{{\mathcal E}}
\renewcommand{\R}{\mathcal{R}}

\def\PP{\mathbb{P}}
\def\II{\mathbb{I}}
\newcommand{\vin}{\rotatebox[origin=c]{90}{$\in$}}
\newcommand{\mcF}{\mathcal{F}}
\newcommand{\dg}{\textsuperscript{\textdagger}}

\newcommand{\fil}[1]{\mathrm{Fil}^{#1}}
\newcommand{\bk}{\mathrm{Breuil}\textendash\mathrm{Kisin}}
\newcommand{\bkf}{\mathrm{Breuil}\textendash\mathrm{Kisin}\textendash\mathrm{Fargues}}
\newcommand{\crys}{\mathrm{crys}}
\newcommand{\Ddr}{D_{\mathrm{dR}}(\fM_{\Ao}^{\inf})}
\newcommand{\und}[1]{\underline{#1}}
\newcommand{\BK}{\mathrm{BK}}
\newcommand{\fppf}{\textit{fppf}}
\newcommand{\dcris}{\mathrm{D}_{\mathrm{cris}}}
\newcommand{\ev}{\mathrm{ev}_p}
\title{\Large\bf Prismatic $F$-gauges and a result of T. Liu}
\author{\large Dat Pham}
\date{}
\maketitle
\begin{abstract}
    We give a new proof of a recent result of Tong Liu, which gives a general control on the torsion in the graded pieces of the so-called integral Hodge filtration associated to a crystalline Galois lattice. Our approach is stack-theoretic, and is inspired on the one hand by a result of Gee--Kisin on the shape of mod $p$ crystalline Breuil--Kisin modules, and on the other hand by the structures seen on the diffracted Hodge complex studied by Bhatt--Lurie. Along the way, we also obtain an explicit description of the Hodge--Tate locus in the Nygaard stack $\O_K^{\N}$ for a general extension $K/\mathbf{Q}_p$. 
\end{abstract}
\tableofcontents
\section{Introduction}
\newcommand\myeq{\stackrel{\mathclap{\normalfont\mbox{def}}}{=}}
Let $k/\mathbf{F}_p$ be a perfect field and let $K:=W(k)[1/p]$. Let $T\in\mathrm{Rep}_{\mathbf{Z}_p}(G_K)$ be a crystalline $G_K$-lattice. Fix a choice of uniformizer $\pi\in \O_K$, with Eisenstein polynomial $E(x)$\footnote{The variable is typically denoted by $u$ but here we reserve the notation $u$ for other use.}. Let $\fM$ be the Breuil--Kisin module associated to $T$ and this choice of $\pi$. Pulling the (full $\mathbf{Z}$-indexed) $E(x)$-adic filtration along the Frobenius $\varphi^*\fM\to \fM[1/E(x)]$, and then pushing along the natural map $\varphi^*\fM\twoheadrightarrow \varphi^*\fM/E(x)\varphi^*\fM$ gives an increasing filtration $\fil{\bullet}_{{H}}\fM_{\mathrm{dR}}$ on $\fM_{\mathrm{dR}}:=\varphi^*\fM/E(x)\varphi^*\fM$, which we will refer to as the (integral) Hodge filtration. (The terminology is justified by the fact that, after inverting $p$ this recovers the usual Hodge filtration on $D_{\mathrm{dR}}(T[1/p])\simeq \fM_{\mathrm{dR}}[1/p]$.) It was observed in \cite{GLSunitary} that the torsion in $\mathrm{gr}_{{H}}^{\bullet}\fM_{\mathrm{dR}}$ bears some relation to the shape of the Frobenius acting on $\fM$. For instance, if there is no torsion, then for any choice of $\fkS$-basis $e=(e_1,\ldots,e_n)$ of $\fM$ we have $\varphi(e)=eA\Lambda B$ where $A,B\in \mathrm{GL}_n(\fkS)$ and $\Lambda$ is a diagonal matrix $\mathrm{diag}(E(u)^{r_i})$ (in general one only has $A,B\in \mathrm{GL}_n(\fkS[1/p]$)). An immediate consequence is that in this case the mod $p$ Breuil--Kisin module $\fM/p\fM$ remembers the Hodge--Tate weights of $T[1/p]$. 

The goal of this note is to give a new proof of the following recent result of Tong Liu, which gives a general control on the torsion appearing in $\mathrm{gr}_{{H}}^{\bullet}\fM_{\mathrm{dR}}$. 
\begin{thm}[{\cite[Thm. 1.1]{liuNygaard}}]\label{main thm}
    We have 
    \begin{align*}
        (\mathrm{gr}_{{H}}^i \fM_{\mathrm{dR}})_{\mathrm{tor}}\ne 0\quad\Longrightarrow \quad \text{$i=r+mp$\;\; for some $r\in \mathrm{HT}$ and $m\in\mathbf{Z}_{>0}$},
    \end{align*}
    where $\mathrm{HT}$ denotes the set of Hodge--Tate weights of $T[1/p]$.
\end{thm}
The key new structure to our proof of Theorem \ref{main thm} is a so-called Sen operator $\Theta$ on $\fM_{HT}:=\fM/E(x)\fM$. Our use of $\Theta$ is guided by the structures seen on the diffracted Hodge complex introduced by Bhatt--Lurie in \cite{BhattLurieabsolute}. In turn, we construct $\Theta$ from a certain differential operator $D$. While it is possible to extract $D$ from the \textit{rational} monodromy operator in the classical theory of Breuil--Kisin modules by some delicate approximations (see Section \ref{section GaoLiu}), its existence --- together with the additional symmetries that it satisfies --- seems best explained by the theory of prismatic $F$-gauges by Bhatt--Lurie \cite{Bhatt}. See Subsection \ref{stacky} below. Our considerations here are inspired by a recent result of Gee--Kisin on the shape of mod $p$ crystalline Breuil--Kisin modules; in particular, we follow their strategy and realize the objects of interest as quasi-coherent sheaves on certain stacks.

\begin{remark}
    In fact, we will construct the operator $D$ for a general (possibly ramified) extension $K/\mathbf{Q}_p$. As a key ingredient for this, we extend the explicit description of the Hodge--Tate locus given in \cite[Prop. 5.3.7]{Bhatt} to the case of a general extension; see Proposition \ref{Hodge-Tate description} below.
\end{remark}
\begin{remark}
Our proof is partially inspired by Drinfeld and Bhatt--Lurie's approach to the Deligne--Illusie theorem via the Sen operator (cf. \cite[Remark 4.7.18]{BhattLurieabsolute}). As an illustration of this analogy, note that Theorem \ref{main thm} implies in particular that in the case where $\fM$ is effective (which is the case for representations coming from geometry), $\mathrm{gr}^i M$ is torsion free for all $i<p$. We can thus roughly think of this special case as an incarnation of (a weaker form of) the Deligne--Illusie theorem (with the bound $i<p$ corresponding to the bound $\dim(\mathfrak{X})<p$ in Deligne--Illusie result).    
\end{remark}

\begin{acknowledgements}
The debt that this note owes to the work of Drinfeld \cite{drinfeldprismatization}, Bhatt--Lurie \cite{BhattLurieabsolute, BLprismatization, Bhatt}, and Gee--Kisin will be obvious to the reader. I would also like to thank Toby Gee and Bao Le Hung for helpful discussions, as well as Toby Gee, Arthur-C\' esar Le Bras, and Tong Liu for their comments. After writing an initial draft of this note, I learned that Gao--Liu and Gee--Kisin have also independently found related proofs of Theorem \ref{main thm}. I am very grateful to them for informing me of their work and for kindly coordinating in announcing our results. This work was supported by the Simons Collaboration on Perfection in Algebra, Geometry, and Topology. 

\end{acknowledgements}
\section{Proof of Main Theorem}
To avoid distractions, we will first prove a more general result (Proposition \ref{main prop}) by isolating the key input. In Subsection \ref{stacky} below we will indicate how it specializes to the situation of Theorem \ref{main thm}. \\

\noindent\textbf{Set up}. Consider an {increasing} (honest) filtration of finite free $\O_K$-modules 
\begin{align*}
    \mathrm{Fil}_{\bullet}^{\mathrm{conj}}: \quad \ldots \subseteq \mathrm{Fil}_0^{\mathrm{conj}}\subseteq \ldots \subseteq \mathrm{Fil}_i^{\mathrm{conj}}\subseteq \ldots.
\end{align*}
We assume that this is a finite filtration, i.e., $\mathrm{Fil}_i^{\mathrm{conj}}$ stabilizes for $i\gg 0$, and is $0$ for $i\ll 0$. \\

\noindent\textbf{Hypothesis.} Assume there is a filtered endomorphism $\Theta: \mathrm{Fil}_{\bullet}^{\mathrm{conj}}\to \mathrm{Fil}_{\bullet}^{\mathrm{conj}}$ with the property that $\Theta$ acts on the $i$th graded piece $\mathrm{gr}_i^{\mathrm{conj}}$ via multiplication by $-i$. (The superscript ``conj'' stands for ``conjugate''.)\\

Inverting $p$ gives a filtration of $K$-vector spaces, and we define as usual its Hodge--Tate weights as the set of filtration jumps, i.e.,
\begin{displaymath}
    \mathrm{HT}:=\{i\in\mathbf{Z}\;|\;\mathrm{gr}_i^{\mathrm{conj}}[1/p]\ne 0\}.
\end{displaymath}
\begin{prop}\label{main prop}
    We have 
    \begin{displaymath}
        (\mathrm{gr}_i^{\mathrm{conj}})_{\mathrm{tor}}\ne 0\quad\Longrightarrow \quad \text{$i=r+mp$\;\; for some $r\in \mathrm{HT}$ and $m>0$}.
    \end{displaymath}
\end{prop}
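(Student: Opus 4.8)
The plan is to upgrade the evident \emph{rational} decomposition of $\Theta$ into generalized eigenspaces to an \emph{integral} decomposition of the ambient lattice, the point being that two eigenvalues $-i,-j$ of $\Theta$ can be separated over $\mathcal{O}_K$ precisely when $i\not\equiv j \pmod p$ (so that $j-i$ is a $p$-adic unit). Write $M:=\bigcup_i \mathrm{Fil}_i^{\mathrm{conj}}$ for the top of the finite filtration; it is finite free over $\mathcal{O}_K$ and carries $\Theta$, which preserves the filtration and acts as $-i$ on $\mathrm{gr}_i^{\mathrm{conj}}$. Since $\Theta$ is upper triangular for the filtration with these graded actions, its characteristic polynomial on $M$ is $\chi(T)=\prod_{i\in \mathrm{HT}}(T+i)^{d_i}\in\mathcal{O}_K[T]$, where $d_i:=\dim_K \mathrm{gr}_i^{\mathrm{conj}}[1/p]$, and Cayley--Hamilton gives $\chi(\Theta)=0$.

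First I would fix the residue class $\bar r$ of the index $i$ in question and group the factors of $\chi$ as $\chi=\chi_{\bar r}\cdot \psi$, where $\chi_{\bar r}$ collects the $(T+s)$ with $s\equiv r\pmod p$ and $\psi$ the rest. Any root $-s$ of $\chi_{\bar r}$ and any root $-s'$ of $\psi$ satisfy $s-s'\in \mathcal{O}_K^\times$, so $\mathrm{Res}(\chi_{\bar r},\psi)$ is a unit and $\chi_{\bar r},\psi$ are comaximal in $\mathcal{O}_K[T]$. Feeding this through $\mathcal{O}_K[T]/(\chi)\xrightarrow{\,T\mapsto \Theta\,}\mathrm{End}(M)$ produces orthogonal idempotents, each a polynomial in $\Theta$, hence a decomposition $M=M_{\bar r}\oplus M'$ that is automatically compatible with the filtration (the idempotents preserve each $\mathrm{Fil}_i^{\mathrm{conj}}$). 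Here $M_{\bar r}$ is again finite free, its Hodge--Tate weights are exactly $\mathrm{HT}\cap \bar r$, and $M'$ carries only weights $\not\equiv r$.

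Next I would check that $\mathrm{gr}_i^{\mathrm{conj}}$ sees only the summand $M_{\bar r}$: on the $M'$-part of $\mathrm{gr}_i^{\mathrm{conj}}$ the operator $\Theta$ acts as $-i$, so $\psi(-i)=\prod_{s'\not\equiv r}(s'-i)^{d_{s'}}$ annihilates it; but $i\equiv r$ makes every $s'-i$ a unit, forcing this part to vanish. Thus $\mathrm{gr}_i^{\mathrm{conj}}$ is the graded piece computed inside $M_{\bar r}$, and it suffices to prove the statement there, where every rational weight is $\equiv r\pmod p$. Inside $M_{\bar r}$ the argument is then elementary: if $i_0$ is the least index with $\mathrm{Fil}_{i_0}^{\mathrm{conj}}(M_{\bar r})\ne 0$, then $\mathrm{gr}_{i_0}^{\mathrm{conj}}(M_{\bar r})=\mathrm{Fil}_{i_0}^{\mathrm{conj}}(M_{\bar r})$ is a nonzero submodule of a free module, hence torsion free, so $i_0\in \mathrm{HT}$; comparing with $s_1:=\min(\mathrm{HT}\cap\bar r)$ gives $i_0=s_1$. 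Therefore the graded pieces vanish below $s_1$ and are torsion free at $s_1$, so a nonzero torsion subgroup forces $i>s_1$; since also $i\equiv s_1\pmod p$, we conclude $i=s_1+mp$ with $s_1\in \mathrm{HT}$ and $m>0$.

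The main obstacle is the second paragraph: producing the $\Theta$-stable, filtration-compatible splitting over $\mathcal{O}_K$ rather than merely over $K$. This is exactly the step that injects the congruence condition modulo $p$, since integrally one can only separate eigenvalues whose difference is a unit. Once the lattice is split along residue classes, the torsion analysis degenerates to the trivial bottom-of-the-filtration observation above, in keeping with the remark that after the relevant structure is in hand ``the rest of the argument is essentially elementary.''
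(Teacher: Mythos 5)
Your proposal is correct, and it packages the argument differently from the paper, although both proofs ultimately rest on the same arithmetic point: the ``eigenvalues'' $-i$ and $-j$ of $\Theta$ on $\mathrm{gr}_i^{\mathrm{conj}}$ and $\mathrm{gr}_j^{\mathrm{conj}}$ can be separated integrally precisely when $i\not\equiv j\pmod p$, since then $i-j\in\O_K^{\times}$. The paper argues by induction on $i$: for $i\notin I:=\{r+mp\,:\,r\in\mathrm{HT},\,m>0\}$ it splits the extension $0\to\mathrm{Fil}_{i-1}^{\mathrm{conj}}\to\mathrm{Fil}_{i}^{\mathrm{conj}}\to\mathrm{gr}_i^{\mathrm{conj}}\to 0$ of $\O_K[\Theta]$-modules by proving $\mathrm{Ext}^1_{\O_K[\Theta]}(\mathrm{gr}_i^{\mathrm{conj}},\mathrm{gr}_j^{\mathrm{conj}})=0$ for all $j<i$ (the Ext group is killed by the unit $i-j$ when $p\nmid i-j$, and $\mathrm{gr}_j^{\mathrm{conj}}=0$ by the inductive hypothesis when $p\mid i-j$). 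You instead make one global, non-inductive decomposition of the top lattice using Cayley--Hamilton and the comaximality of the residue-class factors of the characteristic polynomial, and then reduce to the elementary observation that the first nonzero filtration step of $M_{\bar r}$ is torsion free and occurs at $\min(\mathrm{HT}\cap\bar r)$. Your route yields the marginally sharper (but equivalent) conclusion that torsion at $i$ forces $i>\min(\mathrm{HT}\cap\bar i)$, while the paper's route directly exhibits the (unique) $\Theta$-equivariant splitting of each filtration step with $i\notin I$. Two small points to make explicit in a final write-up: the degenerate case $\mathrm{HT}\cap\bar i=\emptyset$, where $\chi_{\bar r}=1$, $M_{\bar r}=0$, and hence $\mathrm{gr}_i^{\mathrm{conj}}=0$, so the implication is vacuous; and the vanishing of $\mathrm{gr}_j^{\mathrm{conj}}(M_{\bar r})$ for $j\not\equiv r$ (which you need in order to conclude $i_0\equiv r$ and hence $i_0=s_1$), which follows because $\chi_{\bar r}(-j)$ is a unit for such $j$.
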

\begin{proof}
    Set 
\begin{align*}
 I:=\{r+mp\;|\; r\in \mathrm{HT}, m>0\}.   
\end{align*}
We need to show that if $i\notin I$, then $\mathrm{gr}_i^{\mathrm{conj}}$ is $\O_K$-free. We will do this by induction on $i$. If $i\ll 0$, then $\mathrm{gr}_i^{\mathrm{conj}}=0$ and there is nothing to prove. Assume the result for $i'<i$ (and $i'\notin I$), we now deduce it for $i$. In fact we will show the stronger assertion that the sequence 
\begin{align*}
    0\to \mathrm{Fil}_{i-1}^{\mathrm{conj}}\to \mathrm{Fil}_i^{\mathrm{conj}}\to \mathrm{gr}_i^{\mathrm{conj}}\to 0
\end{align*}
of $\O_K[\Theta]$-modules splits (as $\mathrm{Fil}_i^{\mathrm{conj}}$ is $\O_K$-free, this implies in particular that $\mathrm{gr}_i^{\mathrm{conj}}$ is free, as wanted), which in turn will follow from
\begin{displaymath}
    \mathrm{Ext}^1_{\O_K[\Theta]}(\mathrm{gr}_i^{\mathrm{conj}},\mathrm{Fil}_{i-1}^{\mathrm{conj}})=0.
\end{displaymath}
By d\' evissage, it suffices to show 
\begin{displaymath}
    \mathrm{Ext}^1_{\O_K[\Theta]}(\mathrm{gr}_i^{\mathrm{conj}},\mathrm{gr}_{j}^{\mathrm{conj}})=0\quad\text{for each $j<i$}.
\end{displaymath}
We consider two cases. If $p\nmid i-j$, then we are done as the LHS is killed by $(\Theta+i)-(\Theta+j)=i-j$, a unit. If $p|i-j$, then by definition of the set $I$, $j\notin \mathrm{HT}$ and we still have $j\notin I$. Thus, $\mathrm{gr}_j^{\mathrm{conj}}[1/p]=0$ but also $\mathrm{gr}_j^{\mathrm{conj}}$ is $p$-flat by the inductive hypothesis for $j$. This forces $\mathrm{gr}_j^{\mathrm{conj}}=0$, and the result trivially holds. (Note that the same argument also shows that $\mathrm{Hom}_{\O_K[\Theta]}(\mathrm{gr}_i^{\mathrm{conj}},\mathrm{Fil}_{i-1}^{\mathrm{conj}})=0$, i.e. the splitting is unique.)  
\end{proof} 
\subsection{A stacky perspective}\label{stacky}
After Proposition \ref{main prop}, to finish the proof of Theorem \ref{main thm} we need to construct an increasing filtration $\mathrm{Fil}_\bullet^{\mathrm{conj}}$ together with an endomorphism $\Theta$ as above with the additional property that $\mathrm{gr}_{\bullet}^{\mathrm{conj}}\simeq \mathrm{gr}_{\mathrm{H}}^{\bullet} \fM_{\mathrm{dR}}$. 

Our construction of $\mathrm{Fil}_\bullet^{\mathrm{conj}}$ and $\Theta$ is guided by the structure seen on the diffracted Hodge complex studied by Bhatt--Lurie in \cite[\S 4.7]{BhattLurieabsolute}, and is explained in \cite[Remark 6.5.11]{Bhatt} in a geometric context. The material in this subsection is therefore presumably well-known to the experts, although we do not know of a treatment in the literature in the level of generality that we require.

\subsubsection*{Construction of $\mathrm{Fil}_{\bullet}^{\mathrm{conj}}$ and $\Theta$}
Consider the so-called conjugate filtration
\begin{displaymath}
    \mathrm{Fil}_{\bullet}^{\mathrm{conj}}\fM_{HT}: \quad \ldots\hookrightarrow \underbrace{\fil{i-1}\varphi^*\fM/\fil{i}\varphi^*\fM}_{\mathrm{Fil}_{i-1}^{\mathrm{conj}}\fM_{HT}}\xhookrightarrow{u} \underbrace{\fil{i}\varphi^*\fM/\fil{i+1}\varphi^*\fM}_{\mathrm{Fil}_i^{\mathrm{conj}}\fM_{HT}}\xhookrightarrow{} \ldots,
\end{displaymath}
where the transition map $u$ is induced by the multiplication by $E(x)$. One checks easily that this is a finite increasing filtration of finite free $\O_K$-modules, whose underlying non-filtered module is $\fM_{HT}:=\fM/E(x)\fM$ (justifying the notation). Moreover there is a natural graded isomorphism 
\begin{displaymath}
 \mathrm{gr}_{\bullet}^{\mathrm{conj}}\fM_{HT}\simeq \mathrm{gr}^{\bullet} \fM_{dR}.   
\end{displaymath}
(This identification also admits a geometric explanation; see Corollary \ref{identfy Hodge filtration} below.) In what follows, we will often omit $\fM_{HT}$ from the notation and simply write $\mathrm{Fil}^{\mathrm{conj}}_{\bullet}$, etc. We now explain the construction of $\Theta$. 

\textbf{From now on we {drop} the assumption that $K/\mathbf{Q}_p$ is unramified, i.e. we allow $K/\mathbf{Q}_p$ to be any complete discretely valued extension with perfect residue field.}

We will construct an operator $D: \mathrm{Fil}_{\bullet}\to \mathrm{Fil}_{\bullet}[-1]$ with the property that
\begin{align}\label{commutation Du uD}
Du-uD=E'(\pi).    
\end{align}
Once $D$ is constructed, we define $\Theta$ by setting
\begin{equation}\label{intro relation D theta}
\Theta:=uD-iE'(\pi)\quad\text{on $\mathrm{Fil}_i^{\mathrm{conj}}$}.
\end{equation}
(We refer the reader to Lemma \ref{identify Sen} below for a justification of this formula.) It follows from \eqref{commutation Du uD} that $\Theta$ is indeed a filtered morphism. As $uD$ on $\mathrm{Fil}_i^{\mathrm{conj}}$ factors through $u: \mathrm{Fil}_{i-1}^{\mathrm{conj}}\hookrightarrow \mathrm{Fil}_i^{\mathrm{conj}}$ (so induces the zero map on $\mathrm{gr}_i^{\mathrm{conj}}$), we also have that $\Theta$ acts on $\mathrm{gr}_i^{\mathrm{conj}}$ via multiplication by $-iE'(\pi)$. If $K/\mathbf{Q}_p$ is unramified, then clearly $E'(\pi)=1$; hence in this case $\Theta$ acts as $-i$ on $\mathrm{gr}_i^{\mathrm{conj}}$, as desired.

Thus, it remains to construct $D$. As mentioned in the Introduction, while one can extract $D$ from the \textit{rational} monodromy operator in the classical theory of Breuil--Kisin modules, its existence seems best explained by the theory of prismatic $F$-gauges by Bhatt--Lurie, as we now explain. 

More precisely, the existence of $D$ and its relation to $\Theta$ (as given by \eqref{intro relation D theta}) are explained by the following commutative diagrams of stacks over $\Spf(\mathbf{Z}_p)$:
\begin{equation}\label{diagram stack}
    \begin{tikzcd}
    (B\mathbf{G}_m)_{\O_K}\ar[d,hook,"u=0"']\ar[r,hook,"t=0"] &(\mathbf{A}_{+}^1/\mathbf{G}_m)_{\O_K}\ar[r,"i_{dR}"] & \O_K^{\N}\\
    (\mathbf{A}_{-}^1/\mathbf{G}_m)_{\O_K}\ar[r,"can"] & (\mathbf{A}_{-}^1/\mathbf{G}_a^{\mathrm{\sharp}}\rtimes\mathbf{G}_m)_{\O_K}\ar[r,"\pi_{\O_K}","\simeq"'] & (\O_K^{\N})_{t=0}\ar[u,hook,"t=0"]\\
    \Spf(\O_K)=(\mathbf{G}_m/\mathbf{G}_m)_{\O_K}\arrow[rr, bend right=13, "\overline{\rho}_{(\fkS,I)}"']\ar[u,hook,"u\ne 0"]\ar[r,"can"] & (\mathbf{G}_m/\mathbf{G}_a^{\mathrm{\sharp}}\rtimes\mathbf{G}_m)_{\O_K}\ar[u,hook,"u\ne 0"]\ar[r,"\simeq"] &\O_K^{HT}\ar[u,hook,"j_{HT}"]. 
    \end{tikzcd}
\end{equation}
Let us briefly explain the objects appearing in the diagram.
\begin{itemize}
    \item $\mathbf{A}_{+}^1$ (resp. $\mathbf{A}_{-}$) denotes the affine line $\mathbf{A}^1$ where the coordinate $t$ (resp. $u$) is placed in grading degree $1$ (resp. $-1$). Furthermore, we let $\mathbf{G}_a^{\mathrm{\sharp}}$ act on $\mathbf{A}_{-}^1$ by $a\cdot_{\O_K}x:=E'(\pi)a+x$; one checks that this then extends to an action of $\mathbf{G}_a^{\mathrm{\sharp}}\rtimes \mathbf{G}_m$, where the semidirect product is formed by letting $\mathbf{G}_m$ act on $\mathbf{G}_a^{\mathrm{\sharp}}$ by $(\lambda,a)\mapsto \lambda^{-1}a$.
    \item $\O_K^{\N}$ is the filtered prismatization of $\Spf(\O_K)$. This is a filtered stack, i.e., comes with a map $t: \O_K^{\N}\to (\mathbf{A}_{+}^1/\mathbf{G}_m)_{\mathbf{Z}_p}$. Moreover, there is an open embedding $j_{HT}: \O_K^{\Prism}\hookrightarrow \O_K^{\N}$ from the prismatization $\O_K^{\Prism}$. See \cite[\textsection 5.3]{Bhatt} for more details. The map $i_{dR}$ in the top row is the de Rham map defined in \cite[Construction 5.3.13]{Bhatt}. The map $\overline{\rho}_{(\fkS,I)}$ is the usual map associated to the chosen Breuil--Kisin prism $(\fkS,I)$, viewed as an object in the absolute prismatic site of $\Spf(\O_K)$. See \cite[Construction 3.10]{BLprismatization}. 
\item The isomorphism $ (\mathbf{A}_{-}^1/\mathbf{G}_a^{\mathrm{\sharp}}\rtimes \mathbf{G}_m)_{\O_K}\simeq (\O_K^{\N})_{t=0}$ in the middle row will be defined and proved in Subsection \ref{section HT description} below.
\end{itemize}
Finally, we explain the relevance of diagram \eqref{diagram stack} to the construction of the operators $D$ and $\Theta$. More details will be given in Section \ref{section identification} below.
\begin{itemize}
    \item The graded isomorphism $\mathrm{gr}_{\bullet}^{\mathrm{conj}}\fM_{\mathrm{HT}}\simeq \mathrm{gr}_{\mathrm{H}}^{\bullet}\fM_{dR}$ results from commutativity of the top rectangle. See Subsection \ref{subsection de Rham} below.
    \item It follows from the isomorphism $(\O_K^{\N})_{t=0}\simeq (\mathbf{A}_{-}^1/\mathbf{G}_a^{\mathrm{\sharp}}\rtimes \mathbf{G}_m)_{\O_K}$ that quasi-coherent sheaves on $(\O_K^{\N})_{t=0}$ are equivalent to $p$-complete graded modules over the ($p$-completed) Weyl algebra $\O_K\{u,D\}/(Du-uD-E'(\pi))$, i.e. $p$-complete graded $\O_K[u]$-modules $M$ together with a graded endomorphism $D: M\to M[-1]$ satisfying the commutation relation $Du-uD=E'(\pi)$ (and a certain nilpotent condition), cf. Lemma \ref{derivation}. Pulling back along the natural map $[\mathbf{A}_{-}^1/\mathbf{G}_m]\to (\O_K^{\N})_{t=0}$ then simply amounts to forgetting the derivation. Thus if $E\in \mathrm{Coh}(\O_K^{\mathrm{Syn}})$ denotes the $F$-gauge associated to the given crystalline lattice $T$, then $E|_{(\O_K^{\N})_{t=0}}$ gives rise to a derivation $D$ (satisfying the desired properties) on the filtration corresponding (via the Rees construction) to $E|_{[\mathbf{A}_{-}^1/\mathbf{G}_m]}$. We then shows in Lemma \ref{conjugate filtration} below that the latter filtration is nothing but with the conjugate filtration $\mathrm{Fil}_{\bullet}^{\mathrm{conj}}$ introduced above. This finishes the construction of $D$.
    \item It is known that pulling back along the map $\overline{\rho}_{(\fkS,I)}$ lifts to an equivalence between quasi-coherent sheaves on $\O_K^{HT}$ and $p$-complete modules over $\O_K$ equipped with a so-called Sen operator $\Theta$ (see Theorem \ref{BL Sen} below). In Lemma \ref{identify Sen} below, we will show that under this equivalence, the restriction $E|_{\O_K^{HT}}$ corresponds to the module $\varinjlim \mathrm{Fil}^{\mathrm{conj}}_i$ underlying the conjugate filtration equipped with the Sen operator given by $\Theta=uD-iE'(\pi)$ on $\mathrm{Fil}_i^{\mathrm{conj}}$. This explains the above construction of $\Theta$ in terms of $D$, as given in \eqref{intro relation D theta}. 
\end{itemize}\hfill$\square$
\section{Identifications with the stacky picture}\label{section identification}
In this section, we justify the various identifications with the stacky picture, as alluded to in Subsection \ref{stacky}. In particular, we extend the explicit description of the Hodge--Tate locus given in \cite[Prop. 5.3.7]{Bhatt} from the case $K=\mathbf{Q}_p$ to the case of a general extension  $K/\mathbf{Q}_p$; see Proposition \ref{Hodge-Tate description} below.  As mentioned earlier, the materials here will not surprise an expert; however since the proofs are not available in the literature, we work them out here for completeness.
\subsection{Preliminaries}
Let us begin by recalling the relation, as discussed in \cite{Bhatt}, between crystalline Galois lattices and coherent sheaves on $\O_K^{\mathrm{Syn}}$. 

Let $X$ be a quasi-syntomic $p$-adic formal scheme. Recall that for each object $(A,I)\in X_{\Prism}$ in the absolute prismatic site of $X$, there is an associated morphism $\rho_{(A,I)}: \Spf(A)\to X^{\Prism}$ (see \cite[Construction 3.10]{BLprismatization}). By \cite[Prop. 8.15]{BLprismatization}, pulling back along these maps gives an equivalence 
\begin{displaymath}
    \mathrm{Perf}(X^{\Prism})\simeq \varprojlim_{(A,I)\in X_{\Prism}}\mathrm{Perf}(A)=:\mathrm{Perf}(X_{\Prism},\O_{\Prism}) 
\end{displaymath}
onto the category of prismatic crystals in perfect complexes on $X$. 

By definition of $X^{\mathrm{Syn}}$ as a coequalizer, there is a natural (\' etale) map $X^{\Prism}\to X^{\mathrm{Syn}}$. Restricting along this and using the equivalence above, we obtain a functor 
\begin{displaymath}
    \mathrm{Perf}(X^{\mathrm{Syn}})\to \mathrm{Perf}(X^{\Prism})\simeq \mathrm{Perf}(X_{\Prism},\O_{\Prism}),
\end{displaymath}
which in fact naturally lifts to a functor 
\begin{displaymath}
    \mathrm{Perf}(X^{\mathrm{Syn}})\to \mathrm{Perf}^{\varphi}(X_{\Prism},\O_{\Prism})
\end{displaymath}
into the category of prismatic $F$-crystals in perfect complexes on $X$. To see this, note that given any $E\in \mathrm{Perf}(X^{\N})$, there is a natural correspondence
\begin{displaymath}
    j_{HT}^*E \xleftarrow{a} \varphi^*\pi_*E\xrightarrow{b}\varphi^*j_{dR}^*E. 
\end{displaymath}
Namely, $a$ (resp. $b$) comes from adjunction and the identity $\pi\circ j_{HT}=\varphi$ (resp. $\pi\circ j_{dR}=\mathrm{id}$). 
As $M$ is perfect, the maps $a$ and $b$ are in fact $I$-isogenies (where $I\subseteq \O_{X^{\Prism}}$ is the Hodge--Tate ideal sheaf), and so we obtain a natural (in $E$) isomorphism 
\begin{displaymath}
\iota_E: \varphi^*(j_{dR}^*E)[1/I]\simeq j_{HT}^*E[1/I].    
\end{displaymath}
Now lifting $E$ to an object in $\mathrm{Perf}(X^{\mathrm{Syn}})$ amounts to specifying an isomorphism $j_{HT}^*E\simeq j_{dR}^*E$, and so we obtain the desired functor
\begin{displaymath}
    \mathrm{Perf}(X^{\mathrm{Syn}})\to \mathrm{Perf}^{\varphi}(X_{\Prism},\O_{\Prism}). 
\end{displaymath}
See \cite[\textsection 6.3]{Bhatt} for more details. 

Assume now that $X=\Spf(\O_X)$. Let $(A_{\inf},(\xi))$ be the perfect prism associated to $\O_C$ (where $\xi$ is a generator of the kernel of the (non-twisted) Fontaine's theta map). By the preceding discussion, there is a natural functor 
\begin{align}\label{O_C reflexive}
    \mathrm{Perf}(\O_C^{\N}) & \to \{(N,M,\iota)\;\text{where $N,M\in\mathrm{Perf}(A_{\inf})$ and $\iota: N[1/\xi]\simeq M[1/\xi]$}\}\\
    E &\mapsto (\varphi^*(j_{dR}^*E),j_{HT}^*E,\iota_E)\nonumber.
\end{align}
In \cite[\textsection 6.6.1]{Bhatt}, Bhatt isolates a subcategory $\mathrm{Coh}^{\mathrm{refl}}(\O_C^{\N})$ of $\mathrm{Perf}(\O_C^{\N})$ with the property that the functor \eqref{O_C reflexive} restricts to an equivalence 
\begin{align}\label{equiv reflexive O_C N}
    \mathrm{Coh}^{\mathrm{refl}}(\O_C^{\N}) &\simeq \{(N,M,\iota)\;\text{where $N,M\in\mathrm{Vect}(A_{\inf})$ and $\iota: N[1/\xi]\simeq M[1/\xi]$}\}.
\end{align}
Now by \cite[Prop. 5.5.8]{Bhatt} (see also \cite[Exa. 5.5.6]{Bhatt}), there is an isomorphism $\R(\varphi^{-1}(\xi)^{\bullet}A_{\inf})\simeq \O_C^{\N}$, where the LHS denotes the $(p,\xi)$-completed Rees stack for the Nygaard filtration $\varphi^{-1}(\xi)^{\bullet}A_{\inf}$. Composing further with the isomorphism $\varphi: \varphi^{-1}(\xi)^{\bullet}A_{\inf}\simeq \xi^{\bullet}A_{\inf}$, we obtain an isomorphism 
\begin{align}\label{the map pi O_C}
    \pi_{\O_C}: \Spf(A_{\inf}[u,t]/(ut-\xi))/\mathbf{G}_m\simeq \R(\xi^{\bullet}A_{\inf}) &\simeq \O_C^{\N},
\end{align}
where as usual $t$ is the degree 1 Rees parameter and $u$ has degree $-1$. 

For later use we note the following result. 
\begin{lemma}\label{useful lemma identify filtration}
    Let $E\in\mathrm{Coh}^{\mathrm{refl}}(\O_C^{\N})$. Then the filtration over $\xi^{\bullet}A_{\inf}$ corresponding to $\pi_{\O_C}^*E$ (via the Rees dictionary) can be recovered from the tuple $(N,M,\iota)$ associated to $E$ as $N\cap \xi^{\mathbf{Z}}M=\varphi^*(j_{dR}^*E)\cap \xi^{\mathbf{Z}}(j_{HT}^*E)$; here $\xi^{\mathbf{Z}}M$ denotes the full $\mathbf{Z}$-indexed $\xi$-adic filtration on $M[1/\xi]$. (In particular, this is an honest filtration.)
\end{lemma}
\begin{proof}
    This follows from the construction of the quasi-inverse functor in the proof of \cite[Prop. 6.6.3]{Bhatt}. (Strictly speaking, in \cite[Prop. 6.6.3]{Bhatt} Bhatt defines the corresponding subcategory in $\mathrm{Perf}_{\mathrm{gr}}(A_{\inf}[u,t]/(ut-\xi)$ (rather than directly in $\mathrm{Perf}(\O_C^{\N})$), but the reader can check that our presentation above is precisely consistent with Bhatt's.)
\end{proof}
\begin{defn}[{cf. \cite[Defn. 6.6.4]{Bhatt}}]\label{reflexive gauge O_C}
Define the category $\mathrm{Coh}^{\mathrm{refl}}(\O_C^{\mathrm{Syn}})$ of reflexive $F$-gauges on $\O_C$ to be the full subcategory of $\mathrm{Perf}(\O_C^{\mathrm{Syn}})$ consisting of $E$'s such that $E|_{\O_C^{\N}}$ belongs to $\mathrm{Coh}^{\mathrm{ref}}(\O_C^{\N})$.    
\end{defn}
By \eqref{equiv reflexive O_C N}, restricting along $\O_C^{\Prism}\to \O_C^{\mathrm{Syn}}$  yields an equivalence    
\begin{align*}
    \mathrm{Coh}^{\mathrm{refl}}(\O_C^{\mathrm{Syn}})&\simeq \mathrm{Vect}^{\varphi}((\O_C)_{\Prism},\O_{\Prism})
\end{align*}
onto the category of prismatic $F$-crystals in vector bundles on $\O_C$ (cf. \cite[Cor. 6.6.5]{Bhatt}). 

\begin{defn}[{cf. \cite[Defn. 6.6.11]{Bhatt}}]
    An $F$-gauge $E\in \mathrm{Perf}(\O_K^{\mathrm{Syn}})$ is called reflexive if $E|_{\O_C^{\mathrm{Syn}}}$ is reflexive
in the sense of Definition \ref{reflexive gauge O_C}. Write $\mathrm{Coh}^{\mathrm{refl}}(\O_K^{\mathrm{Syn}})$ for the full subcategory spanned by such $F$-gauges.
\end{defn} 
By design, restriction again defines a functor  
\begin{displaymath}
    \mathrm{Coh}^{\mathrm{ref}}(\O_K^{\mathrm{Syn}}) \to \mathrm{Vect}^{\varphi}((\O_K)_{\Prism},\O_{\Prism}) 
\end{displaymath}
which turns out to be an equivalence by \cite[Thm. 6.6.13]{Bhatt}. Combining with the main result of \cite{prismatic} then gives an equivalence 
\begin{displaymath}
    \mathrm{Coh}^{\mathrm{ref}}(\O_K^{\mathrm{Syn}}) \simeq \mathrm{Vect}^{\varphi}((\O_K)_{\Prism},\O_{\Prism})\simeq \mathrm{Rep}_{\mathbf{Z}_p}^{\mathrm{cris}}(G_K).
\end{displaymath}
In particular it makes sense to talk about the $F$-gauge $E\in \mathrm{Coh}^{\mathrm{ref}}(\O_K^{\mathrm{Syn}})$ associated to the given crystalline lattice $T$.

\subsection{Explicit description of the Hodge--Tate locus}\label{section HT description}
The main result in this subsection is Proposition \ref{Hodge-Tate description}, which gives an explicit presentation of the Hodge--Tate locus $(\O_K^{\N})_{t=0}$ as a quotient stack. This extends \cite[Prop. 5.3.7]{Bhatt}, which treats the case $\O_K=\mathbf{Z}_p$. 

We first recall a general construction from \cite[Rem. 5.5.19]{Bhatt}. Namely, given any prism $(A,I)$ and any map $\Spf(A/I)\to X$ of bounded $p$-adic formal schemes, there is a natural map of filtered stacks 
\begin{align*}
 \pi_X: \R(I^{\bullet}A)\to {X}^{\N},   
\end{align*}
where $\R(I^{\bullet}A)$ denotes the $(p,I)$-completed Rees stack of the $I$-adic filtration on $A$. Note that \textit{loc. cit.} seems to assume more than just this data, but this is all that is needed to \textit{construct} the map, as we now recall\footnote{The map $\pi_X$ depends on the prism $(A,I)$ but we omit it for ease of notation. It will also be clear that the map $\pi_{\O_C}$ from \eqref{the map pi O_C} above is a special case of this construction, justifying our notation.}. Recall that, given a $p$-nilpotent test ring $S$, a point $x\in \R(I^{\bullet}A)(S)$ is given by a map $A\to S$ that kills some power of $I$, a line bundle $L\in  \mathrm{Pic}(S)$, and a factorization $I\otimes_A S \xrightarrow{u} L\xrightarrow{t}S$ of the canonical map. As usual, the map $A\to S$ lifts uniquely to give a $\delta$-$A$-algebra structure on the Witt ring scheme $W$ over $S$, and one can consider the commutative diagram with exact rows
\begin{equation}\label{big diagram pi_X general}
    \begin{tikzcd}
                    0\ar[r] & I\otimes_A\mathbf{G}_a^{\mathrm{\sharp}}\arrow[bend right=35,red]{dd}[near start,swap]{can}\ar[r]\ar[d,"u^{\mathrm{\sharp}}"] & I\otimes_A W\arrow[bend right=35,red]{dd}[near start,swap]{can}\ar[d]\ar[r] & I\otimes_A F_*W \ar[d,equal]\ar[r] & 0\\
    0\ar[r] & \mathbf{G}_a^{\mathrm{\sharp}}\ar[d,"t^{\mathrm{\sharp}}"]\ar[r] & M_u\ar[d,red,"d_{u,t}"] \ar[r] & I
    \otimes F_*W \ar[r]\ar[d,"can"] & 0\\
    0\ar[r] & \mathbf{G}_a^{\mathrm{\sharp}}\ar[r] & W\ar[r] & F_*W\ar[r] & 0.
        \end{tikzcd}
\end{equation}
Note that the middle arrow defines a map $A/I\to (W/M_u)(S)$ of animated rings, so the filtered Cartier--Witt divisor $M_u\xrightarrow{d_{u,t}} W$ naturally lifts to a point $\pi_{X}(x)\in X^{\N}(S)$, as wanted.

We now apply this construction to the case $X=\Spf(\O_K)$ and $(A,I)=(W(k)[[x]],E(x))$, our fixed Breuil--Kisin prism. In this case, as explained in \textit{loc. cit.}, the map $\R(I^{\bullet}A)\xrightarrow{\pi_{\O_K}}\O_K^{\N}$ is in fact a flat cover. The choice of the generator $E(x)$ of $I$ identifies $\R(I^{\bullet}A)_{t=0}\simeq (\mathbf{A}_{-}^1/\mathbf{G}_m)_{\O_K}$. Explicitly, a point $(S\xrightarrow{u}L)\in (\mathbf{A}_{-}^1/\mathbf{G}_m)(S)$ corresponds to the point $I\otimes_AS\xrightarrow{u}L\xrightarrow{t=0}S$ of $\R(I^{\bullet}A)_{t=0}$; here we view $u$ as a map $I\otimes_AS\to S$ via the trivialization $I=E(x)A\simeq A$. 
    
    Thus $\pi_{\O_K}$ restricts to a map
    \begin{align}\label{map from A^1 to HT}
     \pi_{\O_K}: (\mathbf{A}_{-}^1)_{\O_K}\to (\mathbf{A}_{-}^1/\mathbf{G}_m)_{\O_K} \to (\O_K^N)_{t=0}   
    \end{align}
    of stacks over $\Spf(\O_K)$. Of course this is still a flat surjection.
\begin{prop}\label{Hodge-Tate description}
   The map \eqref{map from A^1 to HT} factors through an isomorphism
   \begin{align*}
        (\mathbf{A}^1_{-}/\mathbf{G}_a^{\mathrm{\sharp}}\rtimes \mathbf{G}_m)_{\O_K}\simeq (\O_K^{\N})_{t=0}.
    \end{align*}
   of stacks over $\Spf(\O_K)$. Here the action of $\mathbf{G}_a^{\mathrm{\sharp}}\rtimes \mathbf{G}_m$ on $\mathbf{A}_{-}^1$ is given by $(a,\lambda)\cdot_{\O_K} u:=E'(\pi)a+\lambda^{-1}u$\footnote{The appearance of $\lambda^{-1}$ (rather than $\lambda$) is simply due to our convention that the coordinate $u$ of $\mathbf{A}_{-}^1$ has degree $-1$.}.
\end{prop}
We begin with some preparations. The following is simply an elaboration of \cite[Prop. 5.2.1 (2)]{Bhatt}. 
\begin{lemma}[{{\cite[Prop. 5.2.1]{Bhatt}}}]\label{elaboration lemma}
    Applying ${\mathrm{Hom}}_W(-,\mathbf{G}_a^{\mathrm{\sharp}})$ to the standard sequence $0\to I\otimes_A \mathbf{G}_a^{\mathrm{\sharp}}\to I\otimes_A W\to I\otimes_A F_*W\to 0$ gives an exact sequence
\begin{align}\label{pushout lemma}
    {\mathrm{Hom}}_W(I\otimes_A W,\mathbf{G}_a^{\mathrm{\sharp}})\to {\mathrm{Hom}}_W(I\otimes_A\mathbf{G}_a^{\mathrm{\sharp}},\mathbf{G}_a^{\mathrm{\sharp}})\to \mathrm{Ext}^1_W(I\otimes_A F_*W,\mathbf{G}_a^{\mathrm{\sharp}})\to 0.
\end{align}
Using the trivialization $I=E(x)A\simeq A$, we identify 
\begin{align*}
   \mathbf{G}_a^{\mathrm{\sharp}}(S) &\simeq  {\mathrm{Hom}}_W(I\otimes_A W,\mathbf{G}_a^{\mathrm{\sharp}})\\
    a &\mapsto (E(x)\otimes w\mapsto wa),
\end{align*}
and 
\begin{align*}
   S=\mathbf{A}_{-}^1(S) &\simeq  {\mathrm{Hom}}_W(I\otimes_A \mathbf{G}_a^{\mathrm{\sharp}},\mathbf{G}_a^{\mathrm{\sharp}})\\
    u &\mapsto (E(x)\otimes a\xrightarrow{u^{\mathrm{\sharp}}} ua);
\end{align*}
Under these identifications, the first map in \eqref{pushout lemma} identifies with the natural map $\mathbf{G}_a^{\mathrm{\sharp}}(S)\to \mathbf{A}_{-}^1(S)=S$, and the second map takes a point $u\in S$ to the pushout 
\begin{displaymath}
    \begin{tikzcd}
         0\ar[r] & I\otimes_A\mathbf{G}_a^{\mathrm{\sharp}}\ar[r]\ar[d,"u^{\mathrm{\sharp}}"] & I\otimes_A W\ar[d]\ar[r] & I\otimes_A F_*W\ar[d,equal]\ar[r] & 0\\
    0\ar[r] & \mathbf{G}_a^{\mathrm{\sharp}}\ar[r] & M_u\simeq \tfrac{\mathbf{G}_a^{\mathrm{\sharp}}\oplus (I\otimes_A W)}{\{(u^{\mathrm{\sharp}}(x),-x)|x\in I\otimes \mathbf{G}_a^{\mathrm{\sharp}}\}} \ar[r] & I
    \otimes F_*W \ar[r] & 0.
    \end{tikzcd}
\end{displaymath}
Furthermore, given $a\in\mathbf{G}_a^{\mathrm{\sharp}}(S)$, the associated isomorphism $\iota(a): M_{u}\simeq M_{a+u}$ of extensions is given by 
\begin{align}\label{iso extensions elaboration}
    \iota(a): M_u\simeq \tfrac{\mathbf{G}_a^{\mathrm{\sharp}}\oplus (I\otimes_A W)}{\{(u^{\mathrm{\sharp}}(x),-x)|x\in I\otimes \mathbf{G}_a^{\mathrm{\sharp}}\}} &\to  \tfrac{\mathbf{G}_a^{\mathrm{\sharp}}\oplus (I\otimes_A W)}{\{(a+u)^{\mathrm{\sharp}}(x),-x)|x\in I\otimes\mathbf{G}_a^{\mathrm{\sharp}}\}}\simeq M_{a+u}\nonumber \\
     (x,y) & \mapsto (x-a(y),y).
\end{align}
\end{lemma}
\begin{lemma}[{{A twisted version of \cite[Prop. 5.3.7]{Bhatt}}}]\label{twisted version lemma}
    The composition
    \begin{displaymath}
    (\mathbf{A}_{-}^1)_{\O_K}\xrightarrow{\pi_{\O_K}} (\O_K^{\N})_{t=0}\to (\mathbf{Z}_p^{\N})_{t=0}\times \Spf(\O_K)    
    \end{displaymath}
    factors through an isomorphism 
    \begin{align}
        (\mathbf{A}^1_{-}/\mathbf{G}_a^{\mathrm{\sharp}}\rtimes \mathbf{G}_m)_{\O_K}\simeq (\mathbf{Z}_p^{\N})_{t=0}\times\Spf(\O_K).
    \end{align}
    Here the action of $\mathbf{G}_a^{\mathrm{\sharp}}\rtimes \mathbf{G}_m$ on $\mathbf{A}_{-}^1$ is given by $(a,\lambda)\cdot_{\mathbf{Z}_p} u:=a+\lambda^{-1}u$.
\end{lemma}
\begin{proof}
    The proof is similar to that of \cite[Proposition 5.3.7]{Bhatt}, except that one needs to ``twist by $(A,I)$". Let $S$ be a $p$-nilpotent test $\O_K$-algebra. By construction, the above composition takes a point $u\in \mathbf{A}_{-}^1(S)$ (viewed as a linear map $I\otimes_AS\xrightarrow{u}S$, as above) to the filtered Cartier--Witt divisor $M_u\xrightarrow{d_u}W$ determined by commutative diagram 
    \begin{displaymath}
         \begin{tikzcd}\label{diagram twisted version}
                    0\ar[r] & I\otimes_A\mathbf{G}_a^{\mathrm{\sharp}}\arrow[bend right=35,red]{dd}[near start,swap]{can}\ar[r]\ar[d,"u^{\mathrm{\sharp}}"] & I\otimes_A W\arrow[bend right=35,red]{dd}[near start,swap]{can}\ar[d]\ar[r] & I\otimes_A F_*W\arrow[bend left=35,red]{dd}[near start]{can} \ar[d,equal]\ar[r] & 0\\
    0\ar[r] & \mathbf{G}_a^{\mathrm{\sharp}}\ar[d,"0"]\ar[r] & M_u\ar[d,red,"d_{u}"] \ar[r] & I
    \otimes F_*W \ar[ld,"V\circ\beta",blue,hook]\ar[r]\ar[d,"can"] & 0\\
    0\ar[r] & \mathbf{G}_a^{\mathrm{\sharp}}\ar[r] & W\ar[r] & F_*W\ar[r] & 0.
        \end{tikzcd}
    \end{displaymath}
By the proof of \cite[Prop. 3.6.6]{BhattLurieabsolute} we have a factorization
\begin{displaymath}
    \begin{tikzcd}
            I\otimes_A W\ar[r,"F"]\arrow[bend right=20,red,"can"]{rrr} & I\otimes_A F_*W\ar[r,"\beta","\simeq"',blue] & F_*W\ar[r,hook,"V"] & W
        \end{tikzcd}
\end{displaymath}
for some isomorphism $\beta$\footnote{Note that $\beta$ is not induced by the trivialization $I=E(x)A\simeq A$!}. By diagram chasing it then follows that the map $M_u\xrightarrow{d_u}W$ factors as $M_u\twoheadrightarrow I\otimes F_*W\xhookrightarrow{V\circ\beta}W$. 

By definition and by the preceding paragraph, an $S$-point of $\mathbf{A}_{-}^1\times_{(\mathbf{Z}_p^{\N})_{t=0}\times\Spf(\O_K)}\mathbf{A}_{-}^1$ is a triple $(u,u',\iota)$ where $u,u'\in\mathbf{A}_{-}^1(S)$ and $\iota$ is a $W$-linear isomorphism $M_u\simeq M_{u'}$ commuting with the maps onto $I\otimes F_*W$. Consider the diagram
\begin{displaymath}
    \begin{tikzcd}
        0\ar[r] & \mathbf{G}_a^{\mathrm{\sharp}}\ar[d,"\simeq","\lambda^{-1}"']\ar[r] & M_{u}\ar[d,"\iota"',"\simeq"] \ar[r] & I
    \otimes F_*W \ar[r]\ar[d,equal] & 0\\
    0\ar[r] & \mathbf{G}_a^{\mathrm{\sharp}}\ar[r] & M_{u'}\ar[r] & I\otimes F_*W\ar[r] & 0.
    \end{tikzcd}
\end{displaymath}
The induced isomorphism on the left is then of the form $(\lambda^{-1})^{\mathrm{\sharp}}$ for a unique $\lambda\in S^\times$. One can then view $\iota$ as an isomorphism \textit{of extensions} 
\begin{displaymath}
     \begin{tikzcd}
        0\ar[r] & \mathbf{G}_a^{\mathrm{\sharp}}\ar[d,equal]\ar[r] & M_{\lambda^{-1}u}\ar[d,"\iota","\simeq"'] \ar[r] & I
    \otimes F_*W \ar[r]\ar[d,equal] & 0\\
    0\ar[r] & \mathbf{G}_a^{\mathrm{\sharp}}\ar[r] & M_{u'}\ar[r] & I\otimes F_*W\ar[r] & 0.
    \end{tikzcd}
\end{displaymath}
By Lemma \ref{elaboration lemma}, we must have $u'=a+\lambda^{-1}u$ for some (unique) $a\in\mathbf{G}_a^{\mathrm{\sharp}}(S)$ and $\iota=\iota(\alpha)$, the isomorphism \eqref{iso extensions elaboration}.

Thus there is an identification
\begin{align}\label{identification automorphism group}
 \mathbf{A}_{-}^1\times_{(\mathbf{Z}_p^{\N})_{t=0}\times\Spf(\O_K)}\mathbf{A}_{-}^1  \simeq (\mathbf{G}_a^{\mathrm{\sharp}}\rtimes\mathbf{G}_m)\times \mathbf{A}_{-}^1 
\end{align}
of groupoids over $\mathbf{A}_{-}^1$, where the action of $\mathbf{G}_a^{\mathrm{\sharp}}\rtimes \mathbf{G}_m$ on $\mathbf{A}_{-}^1$ is given by $(a,\lambda)\cdot_{\mathbf{Z}_p} u:=a+\lambda^{-1}u$. The composition $(\mathbf{A}_{-}^1)_{\O_K}\xrightarrow{\pi_{\O_K}} (\O_K^{\N})_{t=0}\to (\mathbf{Z}_p^{\N})_{t=0}\times \Spf(\O_K)$ therefore factors through a monomorphism
\begin{align*}
        (\mathbf{A}^1_{-}/\mathbf{G}_a^{\mathrm{\sharp}}\rtimes \mathbf{G}_m)_{\O_K}\hookrightarrow (\mathbf{Z}_p^{\N})_{t=0}\times\Spf(\O_K).
    \end{align*}
It remains to show that the map is surjective flat locally. Given a $p$-nilpotent $\O_K$-algebra $S$, by the proof of \cite[Prop. 5.3.7]{Bhatt} any $S$-point $(M\to W)\in (\mathbf{Z}_p^{\N})_{t=0}(S)$ arises as a composition $M\twoheadrightarrow I\otimes F_*W\xhookrightarrow{V\circ\beta}W$ where the first map is part of the extension $0\to \mathbf{V}(L)^{\mathrm{\sharp}}\to M\to I\otimes F_{*}W\to 0$ defining the admissible module $M$. Working locally one can trivialize the line bundle $L$, and then by Lemma \ref{elaboration lemma} this extension arises as the pushout of the standard sequence $0\to I\otimes_A \mathbf{G}_a^{\mathrm{\sharp}}\to I\otimes_A W\to I\otimes_A F_*W\to 0$ along some map $I\otimes_A\mathbf{G}_a^{\mathrm{\sharp}}\xrightarrow{u}\mathbf{G}_a^{\mathrm{\sharp}}$. This finishes the proof.      
\end{proof}
\begin{remark}\label{explicit iso remark}
We warn the reader the composition
\begin{align*}
    (\mathbf{A}_{-}^1)_{\O_K}\to (\mathbf{A}_{-}^1/\mathbf{G}_a^{\mathrm{\sharp}}\rtimes \mathbf{G}_m)_{\O_K}\xrightarrow[\simeq]{\pi_{\O_K}} (\mathbf{Z}_p^{\N})_{t=0}\times\Spf(\O_K)
\end{align*}
however does not agree with the analogous composition
\begin{displaymath}
(\mathbf{A}_{-}^1)_{\O_K}\to (\mathbf{A}_{-}^1/\mathbf{G}_a^{\mathrm{\sharp}}\rtimes \mathbf{G}_m)_{\O_K}\xrightarrow{\simeq} (\mathbf{Z}_p^{\N})_{t=0}\times\Spf(\O_K)    
\end{displaymath}
defined using the isomorphism from \cite[Prop. 5.3.7]{Bhatt}. In fact, this phenomenon can be already observed at the level of $\mathbf{Z}_p^{HT}$. Namely, recall from \cite[Prop. 5.1.4]{Bhatt} that the map $\eta:\Spf(\mathbf{Z}_p)\to\mathbf{Z}_p^{HT}$ given by the Cartier--Witt divisor $W(\mathbf{Z}_p)\xrightarrow{V(1)}W(\mathbf{Z}_p)$ induces an isomorphism $B\mathbf{G}_m^{\mathrm{\sharp}}\simeq \mathbf{Z}_p^{HT}$. On the other hand, given any Breuil--Kisin prism $(A,I)$ for $\mathbf{Z}_p$, the map $\overline{\rho}_{(A,I)}: \Spf(\mathbf{Z}_p)\to \mathbf{Z}_p^{HT}$ also induces an isomorphism $B\mathbf{G}_m^{\mathrm{\sharp}}\simeq\mathbf{Z}_p^{HT}$ (see e.g. the discussion in Subsection \ref{identify Sen} below). However, the maps $\eta$ and $\overline{\rho}_{(A,I)}$ do not coincide in general. For instance, for $(A,I):=(\mathbf{Z}_p[[x]],(x-p))$ one can check that they agree if and only if $p>2$.

We also note here that the description of $(\mathbf{Z}_p^{\N})_{t=0}$ from \cite[Prop. 5.3.7]{Bhatt} is completely canonical. This seems to be specific to the case $K=\mathbf{Q}_p$: for a general $K$ our similar description in Lemma \ref{Hodge-Tate description} requires the choice of a uniformizer. 
\end{remark}
\begin{remark} For future reference, we record here the isomorphism (of filtered Cartier--Witt divisors)
\begin{align*}
 \iota(a,\lambda): (M_u\xrightarrow{d_u} W)\simeq (M_{(a,\lambda)\cdot_{\mathbf{Z}_p}u} \xrightarrow{d_{(a,\lambda)\cdot_{\mathbf{Z}_p}u}} W),
\end{align*}
associated to an element $(a,\lambda)\in (\mathbf{G}_a^{\mathrm{\sharp}}\rtimes\mathbf{G}_m)(S)$ via the identification \eqref{identification automorphism group} above. It suffices to consider the factors $\mathbf{G}_a^{\mathrm{\sharp}}$ and $\mathbf{G}_m$ separately. First, the isomorphism $\iota(\lambda): M_u\simeq M_{\lambda^{-1}u}$ for $\lambda\in \mathbf{G}_m(S)$ is simply given by the pushout diagram
\begin{displaymath}
    \begin{tikzcd}
        0\ar[r] & \mathbf{G}_a^{\mathrm{\sharp}}\ar[d,"\lambda^{-1}"',"\simeq"]\ar[r] & M_u\ar[r]\ar[d,"\simeq",red] \ar[r] & I\otimes F_*W\ar[d,equal]\ar[r] & 0\\
        0\ar[r] & \mathbf{G}_a^{\mathrm{\sharp}}\ar[r] & M_{\lambda^{-1}u}\ar[r] & I\otimes F_*W\ar[r] & 0.
    \end{tikzcd}
\end{displaymath}
Now let $a\in \mathbf{G}_a^{\mathrm{\sharp}}(S)$. Recall that by its construction as a pushout, $M_u\simeq \tfrac{\mathbf{G}_a^{\mathrm{\sharp}}\oplus (I\otimes_A W)}{\{(u^{\mathrm{\sharp}}(x),-x)|x\in I\otimes \mathbf{G}_a^{\mathrm{\sharp}}\}}$ and similarly for $M_{a+u}$. The isomorphism $\iota(a): M_u\simeq M_{a+u}$ is then given by 
\begin{equation}\label{Ga sharp diagram}
    \begin{tikzcd}
        0\ar[r] & \mathbf{G}_a^{\mathrm{\sharp}}\ar[d,equal]\ar[r] & M_u\simeq \tfrac{\mathbf{G}_a^{\mathrm{\sharp}}\oplus (I\otimes_A W)}{\{(u^{\mathrm{\sharp}}(x),-x)|x\in I\otimes \mathbf{G}_a^{\mathrm{\sharp}}\}}\ar[d,"{(x,y)\mapsto (x-a(y),y)}","\simeq"',red]\ar[r] & I\otimes F_*W\ar[d,equal] \ar[r] & 0\\
        0 \ar[r] & \mathbf{G}_a^{\mathrm{\sharp}}\ar[r] & M_{a+u}\simeq\tfrac{\mathbf{G}_a^{\mathrm{\sharp}}\oplus (I\otimes_A W)}{\{(a+u)^{\mathrm{\sharp}}(x),-x)|x\in I\otimes\mathbf{G}_a^{\mathrm{\sharp}}\}}\ar[r] & I\otimes F_*W \ar[r] & 0.
    \end{tikzcd}
\end{equation}
\end{remark}
We are now ready to prove Proposition \ref{Hodge-Tate description}.
\begin{proof}[Proof of Proposition \ref{Hodge-Tate description}]
    Inspired by \cite[Construction 9.4]{BLprismatization}, we introduce the following auxiliary functor. Let $\F$ be the functor taking a $p$-nilpotent $\O_K$-algebra $S$ to the set of pairs $(u\in S,\tau)$, where $\tau$ is an $A$-linear map $I\to M_u$ making the diagram 
    \begin{equation}\label{tau}
        \begin{tikzcd}
            I\ar[d]\ar[r,"\tau"] & M_u\ar[d,"d_u"]\\
            A\ar[r] &  W
        \end{tikzcd}
    \end{equation}
    commute. Here as before $M_u\xrightarrow{d_u} W$ is the filtered Cartier--Witt divisor underlying the image of $u\in S=\mathbf{A}_{-}^1(S)$ under the map $\pi_{\O_K}$, which (by construction) is determined by the commutative diagram 
    \begin{equation}\label{big diagram pi_OK}
        \begin{tikzcd}
            0\ar[r] & I\otimes_A\mathbf{G}_a^{\mathrm{\sharp}}\ar[r]\ar[d,"u^{\mathrm{\sharp}}"] & I\otimes_A W\arrow[bend right=35,red]{dd}[near start,swap]{can}\ar[d,red,"\tau_{u,0}"]\ar[r] & I\otimes_A F_*W \ar[d,equal]\ar[r] & 0\\
    0\ar[r] & \mathbf{G}_a^{\mathrm{\sharp}}\ar[d,"0"]\ar[r] & M_u\ar[d,red,"d_u"] \ar[r] & I
    \otimes F_*W\ar[ld,"V\circ\beta",red,swap,hook] \ar[r]\ar[d,"can"] & 0\\
    0\ar[r] & \mathbf{G}_a^{\mathrm{\sharp}}\ar[r] & W\ar[r] & F_*W\ar[r] & 0.
        \end{tikzcd}
    \end{equation}
    Note that the middle column gives a map $I\xrightarrow{\tau_{u,0}}M_u$ which is an instance of the maps $\tau$ appearing in the definition of $\F$. This gives a map $(\mathbf{A}_{-}^1)_{\O_K}\to \F, u\mapsto (u,\tau_{u,0})$. Furthermore the commutative square \eqref{tau} can be viewed as a map of quasi-ideals, and thus induces a map $\bar{\tau}: A/I\to (W/M_u)(S)$ of animated rings. The assignment $(u\in S,\tau)\mapsto (M_u\xrightarrow{d_u} W,\bar{\tau})$ then defines a map $\F\to (\O_K^{\N})_{t=0}$ which clearly fits into 
    \begin{displaymath}
        \begin{tikzcd}
            (\mathbf{A}_{-}^1)_{\O_K}\arrow[bend left=30]{rr}{\pi_{\O_K}}\ar[r] & \F\ar[r] & (\O_K^{\N})_{t=0}. 
        \end{tikzcd}
    \end{displaymath}
    In particular the map $\F\to (\O_K^{\N})_{t=0}$ is also a surjection in the flat topology. 
    
    We next explain that the preferred element $\tau_{u,0}$ gives an identification $\F\simeq \mathbf{A}_{-}^1\times \mathbf{G}_a^{\mathrm{\sharp}}$. Indeed, as explained in the proof of Lemma \ref{twisted version lemma}, the map $d_u$ factors as $M_u\twoheadrightarrow I\otimes F_*W\xhookrightarrow{V\circ\beta} W$; in particular $\ker(d_u)$ identifies with $\mathbf{G}_a^{\mathrm{\sharp}}\hookrightarrow M_u$. Thus the assignment $\tau\mapsto \tau-\tau_{u,0}$ induces (for each fixed $u\in S$) a bijection between the set of $\tau$ making diagram \eqref{tau} commute and $\mathrm{Hom}_A(I,\mathbf{G}_a^{\mathrm{\sharp
    }})\simeq \mathbf{G}_a^{\mathrm{\sharp}}\{-1\}\simeq \mathbf{G}_a^{\mathrm{\mathrm{\sharp}}}$ (for the last identification we use again the chosen generator $E(x)$ of $I$). Under this identification, the map $(\mathbf{A}_{-}^1)_{\O_K}\to \F$ is simply $\mathbf{A}_{-}^1 \to \mathbf{A}_{-}^1\times\mathbf{G}_a^{\mathrm{\sharp}}, u\mapsto (u,0)$. 

    Now we compute $\F\times_{\O_K^{\N}}\F$. By definition, for a $p$-nilpotent $\O_K$-algebra $S$, $(\F\times_{\O_K^{\N}}\F)(S)$ is the groupoid of tuples $((u,\tau),(u',\tau');\iota,\kappa)$ where $(u,\tau),(u',\tau')\in \F(S)$; $\iota$ is an isomorphism 
\begin{displaymath}
    \begin{tikzcd}
        M_u\ar[dr,"d_u"']\ar[rr,"\iota","\simeq"'] && M_{u'}\ar[dl,"d_{u'}"]\\
        & W &
    \end{tikzcd}
\end{displaymath}
of filtered Cartier--Witt divisors, \textit{and} $\kappa$ is a homotopy
\begin{displaymath}
    \begin{tikzcd}
        A/I \arrow[r, bend left=50, ""{name=U, below},"\overline{\iota\circ\tau}"]
\arrow[r, bend right=50, ""{name=D,},"\overline{\tau}'"']
& (W/M_{u'})(S)
\arrow[rightarrow,from=U, to=D,"\kappa",swap]
    \end{tikzcd}
\end{displaymath}
from $\overline{\iota\circ \tau}$ to $\overline{\tau}'$ (recall that these are maps of animated rings). 

By Lemma \ref{twisted version lemma}, such isomorphisms $\iota$ are precisely of the forms $\iota=\iota(a,\lambda)$ for elements $(a,\lambda)\in (\mathbf{G}_a^{\mathrm{\sharp}}\rtimes\mathbf{G}_m)(S)$ such that $u'=(a,\lambda)\cdot_{\mathbf{Z}_p} u$. Moreover, using the explicit description of $\iota(a,\lambda)$ given in Remark \ref{explicit iso remark}, one checks that, under the identification $\F\simeq \mathbf{A}_{-}^1\times \mathbf{G}_a^{\mathrm{\sharp}}$ above, the action $((a,\lambda),(u,\tau))\mapsto ((a,\lambda)\cdot_{\mathbf{Z_p}}u,\iota(a,\lambda)\circ \tau)$ corresponds precisely to the action of $\mathbf{G}_a^{\mathrm{\sharp}}\rtimes \mathbf{G}_m$ on $\mathbf{A}_{-}^1\times\mathbf{G}_a^{\mathrm{\sharp}}$ given by the above action $\cdot_{\mathbf{Z}_p}$ on $\mathbf{A}_{-}^1$, and the following action 
\begin{align*}
 (a,\lambda)*x:=-a+\lambda^{-1}x   
\end{align*}
on $\mathbf{G}_a^{\mathrm{\sharp}}$. (Note the minus sign in $-a$(!); this comes precisely from the minus sign appearing in the formula of the isomorphism $\iota(a)$ in \eqref{Ga sharp diagram}.)  

It remains to consider the homotopy $\kappa$. The collection of such $\kappa$'s naturally identifies with the set of derivations $D\in\mathrm{Der}(A,\mathbf{G}_a^{\mathrm{\sharp}})$ such that $D|_I=\tau'-\iota\circ\tau$. This follows from the equivalence between quasi-ideals and DG algebras concentrated in degree $[-1,0]$ from \cite[\textsection 3.3]{drinfeldQuasi-ideal}. Note that under the identifications $\{\tau\}\simeq \mathbf{G}_a^{\mathrm{\sharp}}$ above and $\mathrm{Der}(A,\mathbf{G}_a^{\mathrm{\sharp}})\simeq \mathrm{Hom}_{\O_K}(\Omega^1_A\otimes_A \O_K,\mathbf{G}_a^{\mathrm{\sharp}})=\mathrm{Hom}_{\O_K}(\O_Kdx,\mathbf{G}_a^{\mathrm{\sharp}})\simeq \mathbf{G}_a^{\mathrm{\sharp}}$, the action $(D,\tau)\mapsto D|_I+\tau$ corresponds to the action of $\mathbf{G}_a^{\mathrm{\sharp}}$ on $\mathbf{G}_a^{\mathrm{\sharp}}$ given by $a\cdot_{\O_K}x:=E'(\pi)a+x$. 

In summary, we have shown that the map $\F\to (\O_K^{\N})_{t=0}$ factors through an isomorphism 
\begin{align*}
    \left(\tfrac{\mathbf{A}_{-}^1\times\mathbf{G}_a^{\mathrm{\sharp}}}{\mathbf{G}_a^{\mathrm{\sharp}}\rtimes (\mathbf{G}_a^{\mathrm{\sharp}}\rtimes \mathbf{G}_m)}\right)_{\O_K}\simeq (\O_K^{\N})_{t=0}.
\end{align*}
Here,
\begin{itemize}
    \item in the formation of the semidirect product $\mathbf{G}_a^{\mathrm{\sharp}}\rtimes (\mathbf{G}_a^{\mathrm{\sharp}}\rtimes \mathbf{G}_m)$, $\mathbf{G}_a^{\mathrm{\sharp}}\rtimes \mathbf{G}_m$ acts on $\mathbf{G}_a^{\mathrm{\sharp}}$ via the factor $\mathbf{G}_m$ and the multiplication action $(\lambda,a)\mapsto \lambda^{-1}a$ of $\mathbf{G}_m$ on $\mathbf{G}_a^{\mathrm{\sharp}}$;
    \item the factor $\mathbf{G}_a^{\mathrm{\sharp}}$ acts trivially on $\mathbf{A}_{-}^1$, and acts via $a\cdot_{\O_K}x:=E'(\pi)a+x$ on $\mathbf{G}_a^{\mathrm{\sharp}}$;
    \item the factor $\mathbf{G}_a^{\mathrm{\sharp}}\rtimes \mathbf{G}_m$ acts on $\mathbf{A}_{-}^1$ via $(a,\lambda)\cdot_{\mathbf{Z}_p}x:=a+\lambda^{-1}x$, and acts on $\mathbf{G}_a^{\mathrm{\sharp}}$ via $(a,\lambda)*x:=-a+\lambda^{-1}x$.  
\end{itemize}
Note that the above action $*$ of $\mathbf{G}_a^{\mathrm{\sharp}}\rtimes \mathbf{G}_m$ on $\mathbf{G}_a^{\mathrm{\sharp}}$ is transitive. In fact, under the embedding $\mathbf{G}_m^{\mathrm{\sharp}}\hookrightarrow \mathbf{G}_a^{\mathrm{\sharp}}\rtimes \mathbf{G}_m, \lambda\mapsto (1-\lambda^{-1},\lambda)$, the induced action of $\mathbf{G}_m^{\mathrm{\sharp}}$ on $\mathbf{G}_a^{\mathrm{\sharp}}$ is $(\lambda,a)\mapsto \lambda^{-1}(a+1)-1$, which is simply transitive (if we identify $\mathbf{G}_m^{\mathrm{\sharp}}=\mathbf{G}_a^{\mathrm{\sharp}}+1$ inside $W$, this is nothing but the multiplication action of the group scheme $\mathbf{G}_m^{\mathrm{\sharp}}$ on itself). 

Thus the map $\mathbf{A}_{-}^1\to  \mathbf{A}_{-}^1\times\mathbf{G}_a^{\mathrm{\sharp}}, u\mapsto (u,0)$ induces a surjection
\begin{align*}
    \mathbf{A}_{-}^1\twoheadrightarrow \tfrac{\mathbf{A}_{-}^1\times\mathbf{G}_a^{\mathrm{\sharp}}}{\mathbf{G}_a^{\mathrm{\sharp}}\rtimes (\mathbf{G}_a^{\mathrm{\sharp}}\rtimes \mathbf{G}_m)}.
\end{align*}
By unraveling the various actions, one then checks that this factors through an isomorphism
\begin{align*}
    (\mathbf{A}_{-}^1/\mathbf{G}_a^{\mathrm{\sharp}}\rtimes \mathbf{G}_m)_{\O_K}\simeq \left(\tfrac{\mathbf{A}_{-}^1\times\mathbf{G}_a^{\mathrm{\sharp}}}{\mathbf{G}_a^{\mathrm{\sharp}}\rtimes (\mathbf{G}_a^{\mathrm{\sharp}}\rtimes \mathbf{G}_m)}\right)_{\O_K}\simeq (\O_K^{\N})_{t=0},
\end{align*}
where in the LHS the action of $\mathbf{G}_a^{\mathrm{\sharp}}\rtimes \mathbf{G}_m$ on $\mathbf{A}_{-}^1$ is given by $(a,\lambda)\cdot_{\O_K}x:=E'(\pi)a+\lambda^{-1}x$. This finishes the proof. 
\end{proof}
\begin{remark}
    The same proof applies to give a similar description of $(R^{\N})_{t=0}$ for a general complete Noetherian regular local ring $R$ with perfect residue field of characteristic $p>0$. See \cite[Prop. 9.5]{BLprismatization} for the case of the open $(R^{\N})_{t=0}\cap j_{HT}(R^{\Prism})$. 
\end{remark}
\subsection{Identifying the Sen operator}
We now check that the lower diagram\footnote{Over $\Spf(\mathbf{Z}_p)$, $\mathbf{G}_m\subseteq\mathbf{A}_{-}^1$ is stable under the action $\cdot_{\O_K}$ of $\mathbf{G}_a^{\mathrm{\sharp}}\rtimes \mathbf{G}_m$, so it really makes sense to consider the quotient $\mathbf{G}_m/\mathbf{G}_a^{\mathrm{\sharp}}\rtimes\mathbf{G}_m$. Indeed, if $R$ is $p$-nilpotent and $a\in R$ admits divided powers, then $a^n\in n!R=0$ for $n\gg 0$, and so $a\cdot_{\O_K}x=E'(\pi)a+x\in R^\times$ for any $x\in R^\times$.}
\begin{displaymath}
    \begin{tikzcd}
     (\mathbf{A}_{-}^1/\mathbf{G}_m)_{\O_K}\ar[r,"can"] & (\mathbf{A}_{-}^1/\mathbf{G}_a^{\mathrm{\sharp}}\rtimes\mathbf{G}_m)_{\O_K}\ar[r,"\simeq"] & (\O_K^{\N})_{t=0}\\
    \Spf(\O_K)=(\mathbf{G}_m/\mathbf{G}_m)_{\O_K}\arrow[rr, bend right=13, "\overline{\rho}_{(\fkS,I)}"']\ar[u,hook,"u\ne 0"]\ar[r,"can"] & (\mathbf{G}_m/\mathbf{G}_a^{\mathrm{\sharp}}\rtimes\mathbf{G}_m)_{\O_K}\ar[u,hook,"u\ne 0"]\ar[r,"\simeq"] &\O_K^{HT}\ar[u,hook,"j_{HT}"]
    \end{tikzcd}
\end{displaymath}
from \eqref{diagram stack} is Cartesian. By tracing through definitions, it is easy to see that it commutes. For the Cartesian property, note that the middle column of diagram \eqref{big diagram pi_OK} defines a morphism $(I\otimes_A W\xrightarrow{can}W)\to (M_u\xrightarrow{d_u} W)$ of filtered Cartier--Witt divisors. Since $\O_K^{\Prism}\xhookrightarrow{j_{HT}} \O_K^{\N}$ identifies with the subgroupoid of invertible filtered Cartier--Witt divisors, we have by rigidity \cite[Lem. 5.1.5]{Bhatt} that, $(M_u\xrightarrow{d_u}W)\in j_{HT}(\O_K^{\Prism})$ if and only if $(I\otimes_A W\xrightarrow{can}W)\to (M_u\xrightarrow{d_u} W)$ is an isomorphism if and only if $I\otimes_A S\xrightarrow{u}S$ is an isomorphism, as wanted.\hfill $\square$

Our next goal is to identify the so-called Sen operator corresponding to the restriction $E|_{\O_K^{{HT}}}$; see Lemma \ref{identify Sen} below. This will explain our construction of $\Theta$ from $D$, as given in \eqref{intro relation D theta}. 

To this end, note that since the action $\cdot_{\O_K}$ of $\mathbf{G}_a^{\mathrm{\sharp}}\rtimes \mathbf{G}_m$ (or even just the subgroup $1\rtimes \mathbf{G}_m$) on $\mathbf{G}_m$ is transitive, the map $\overline{\rho}_{(\fkS,I)}:\Spf(\O_K)\to \O_K^{HT}$ factors through an isomorphism
\begin{displaymath}
    B(\mathrm{Stab}_{\mathbf{G}_a^{\mathrm{\sharp}}\rtimes \mathbf{G}_m}(1\in \mathbf{G}_a))\simeq  (\mathbf{G}_m/\mathbf{G}_a^{\mathrm{\sharp}}\rtimes\mathbf{G}_m)_{\O_K
    }\simeq \O_K^{HT}.
\end{displaymath}
Let $G_{\pi}:=\mathrm{Stab}_{\mathbf{G}_a^{\mathrm{\sharp}}\rtimes \mathbf{G}_m}(1\in \mathbf{G}_a)$. By definition
\begin{align*}
    G_{\pi}=\{(a,\lambda)\in \mathbf{G}_a^{\mathrm{\sharp}}\rtimes\mathbf{G}_m\;|\;E'(\pi)a+\lambda^{-1}=1\}.
\end{align*}
Via the projection $(a,\lambda)\mapsto a$, $G_{\pi}$ identifies with $\mathbf{G}_{a}^{\mathrm{\sharp}}$ as a formal scheme. The group structure on $G_{\pi}$ then transfers to the operation
\begin{align}\label{twisted action on Ga sharp}
    a\bullet b:=a+(1-E'(\pi)a)b
\end{align}
on $\mathbf{G}_a^{\mathrm{\sharp}}$. 

Thus pulling back along $\overline{\rho}_{(\fkS,I)}$ identifies quasi-coherent sheaves on $\O_K^{{HT}}$ with $p$-complete $\O_K$-modules $M$ equipped with a continuous coaction $M\to M\widehat{\otimes}_{\O_K}\O(\mathbf{G}_a^{\mathrm{\sharp}})$, where $\mathbf{G}_a^{\mathrm{\sharp}}\simeq G_{\pi}$ is equipped with the group structure given by \eqref{twisted action on Ga sharp} above. For such $M$, the Sen operator $\Theta_M: M\to M$ is defined as the infinitesimal action of the element $\epsilon\in \mathrm{Lie}(\mathbf{G}_a^{\mathrm{\sharp}})\subseteq \mathbf{G}_a^{\mathrm{\sharp}}(\O_K[\epsilon])$, i.e. $1+\epsilon\Theta_M$ is given by the composition
\begin{align*}
    M\to M\widehat{\otimes}_{\O_K} \O(\mathbf{G}_a^{\mathrm{\sharp}})\xrightarrow{a\mapsto \epsilon} M\otimes_{\O_K}\O_K[\epsilon]=M\oplus \epsilon M
\end{align*}
(where as before $a$ denotes the coordinate on $\mathbf{G}_a^{\mathrm{\sharp}}$). See \cite[\textsection 2.2]{arthurSenHodgeTate} for more details. Concretely, $\Theta_{M}$ is given by 
\begin{displaymath}
    \Theta_{M}: M\to M\widehat{\otimes}_{\O_K} \O(\mathbf{G}_a^{\mathrm{\sharp}})\xrightarrow{(d/da)|_{a=0}}M\otimes \O_K\simeq M.
\end{displaymath} 
\begin{thm}[{cf. \cite[Theorem 2.5]{arthurSenHodgeTate}}]\label{BL Sen}
    The functor
    \begin{align*}
        \mathrm{QCoh}(\O_K^{{HT}}) &\to \mathrm{Mod}_{\O_K[\Theta]}\\
        E &\mapsto (\eta^*E,\Theta_{\eta^*E})
    \end{align*}
    is fully faithful. Its essential image consists of those $M$ which are $p$-complete and for which the action of $\Theta^p-E'(\pi)^{p-1}\Theta$ on the cohomology $H^{\bullet}(k\otimes_{\O_K}^{{L}} M)$ is locally nilpotent\footnote{There is a similar equivalence for quasi-coherent complexes, but we will only need the result at the abelian level.}.
\end{thm}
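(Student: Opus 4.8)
The plan is to reduce the statement to a concrete piece of algebra about comodules and then extract both assertions from a single reconstruction formula. First I would use that $\eta\colon \Spf(\mathbf{Z}_p)\to \mathbf{Z}_p^{\mathrm{HT}}\simeq B\mathbf{G}_m^{\#}$ is an fpqc cover (established above via \cite[Prop.~5.1.14]{Bhatt}) to identify $\mathrm{QCoh}(\mathbf{Z}_p^{\mathrm{HT}})$ with the category of $p$-complete $\O(\mathbf{G}_m^{\#})$-comodules: pulling back along $\eta$ forgets the coaction, so $M=\eta^*E$ is the underlying $\mathbf{Z}_p$-module and the descent datum is precisely a coaction $\rho\colon M\to M\widehat{\otimes}\O(\mathbf{G}_m^{\#})$. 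Here $\O(\mathbf{G}_m^{\#})$ is the $p$-completed divided-power envelope of $\mathbf{Z}_p[t^{\pm 1}]$ along the augmentation ideal $(t-1)$, a free $p$-complete module on the divided powers $s^{[n]}$ of $s:=t-1$, with $t=1+s$ grouplike.

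Second I would record the reconstruction formula. Writing the coaction in the basis $s^{[n]}$ as $\rho(m)=\sum_{n\ge 0}\theta_n(m)\otimes s^{[n]}$, coassociativity together with $t$ being grouplike forces $\theta_n=(\Theta_M)_n:=\Theta_M(\Theta_M-1)\cdots(\Theta_M-n+1)$, the $n$th falling factorial of the Sen operator; concretely this is the integral form of $(1+s)^{\Theta}=\sum_n \binom{\Theta}{n}n!\,s^{[n]}$, which on a weight-$k$ vector reduces to $\rho(m)=m\otimes t^k$. Granting this, full faithfulness is immediate: a $\mathbf{Z}_p$-linear map is a comodule morphism iff it commutes with every $(\Theta_M)_n$, and since each $(\Theta_M)_n$ is a polynomial in $\Theta_M=(\Theta_M)_1$ this holds iff it commutes with $\Theta_M$. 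Conversely one recovers $\Theta_M$ from $\rho$ by applying $(d/dt)|_{t=1}$, so the two conditions genuinely coincide.

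Third, the essential image. A $p$-complete $\mathbf{Z}_p[\Theta]$-module $M$ lies in the image precisely when the formula $\rho(m)=\sum_n (\Theta)_n(m)\otimes s^{[n]}$ defines a valid coaction; coassociativity and counitality are formal consequences of the falling-factorial identities (the same identities expressing that $t$ is grouplike), so the only real condition is convergence, i.e. that $(\Theta)_n(m)\to 0$ $p$-adically for every $m$, which is exactly what it means for the sum to lie in $M\widehat{\otimes}\O(\mathbf{G}_m^{\#})$. The bridge to the stated criterion is the congruence $(\Theta)_{kp}\equiv (\Theta^p-\Theta)^k\pmod p$, coming from $\prod_{j=0}^{p-1}(\Theta-j)\equiv \Theta^p-\Theta$ in $\mathbf{F}_p[\Theta]$. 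This gives the easy direction at once: if $M$ is in the image, reducing the convergent coaction mod $p$ shows $(\Theta^p-\Theta)^k(m)\to 0$ in $M/p$, i.e. $\Theta^p-\Theta$ is locally nilpotent mod $p$.

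The main obstacle is the converse, upgrading local nilpotence mod $p$ to genuine $p$-adic convergence. Here I would argue by a $p$-adic bootstrap: given $m$, local nilpotence yields $N_0$ with $(\Theta^p-\Theta)^{N_0}(m)\in pM$, hence $(\Theta)_{N_0 p}(m)=p\,m_1$; since the $(\Theta)_n$ are commuting polynomials in $\Theta$, one factors $(\Theta)_n$ through $(\Theta)_{N_0 p}$ for $n\ge N_0 p$ and applies local nilpotence again to $m_1$, and so on, producing for each $c$ an index beyond which $(\Theta)_n(m)\in p^c M$; $p$-completeness then gives $(\Theta)_n(m)\to 0$, so $\rho$ is well-defined. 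The one point requiring care is that the criterion is phrased via $H^\bullet(\mathbf{F}_p\otimes^{\mathrm{L}}M)$ rather than $M/pM$: when $M$ has $p$-torsion the elements $m_1,m_2,\dots$ are only defined up to $M[p]$, and it is precisely local nilpotence of $\Theta^p-\Theta$ on $H^{-1}(\mathbf{F}_p\otimes^{\mathrm{L}}M)=M[p]$ that makes the bootstrap independent of these choices. I would isolate this torsion bookkeeping as the technical heart, the rest being the formal comodule and Rees-type manipulations above.
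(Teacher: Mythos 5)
This statement is not proved in the paper: it is quoted as a black box from \cite[Theorem 3.5.8]{BhattLurieabsolute}, so there is no internal argument to compare yours against. What you propose is, in outline, the standard proof (essentially the one in the cited reference): fpqc descent along $\eta$ identifies $\mathrm{QCoh}(\mathbf{Z}_p^{\mathrm{HT}})\simeq\mathrm{QCoh}(B\mathbf{G}_m^{\#})$ with $p$-complete comodules over $\O(\mathbf{G}_m^{\#})$; the divided-power basis $s^{[n]}$ with $s=t-1$ converts a coaction into the falling-factorial series $\rho(m)=\sum_{n}(\Theta)_n(m)\otimes s^{[n]}$; full faithfulness follows because each $(\Theta)_n$ is a polynomial in $\Theta=(\Theta)_1$; and the essential image is controlled by $p$-adic convergence of $(\Theta)_n(m)$ together with the congruence $(\Theta)_{kp}\equiv(\Theta^p-\Theta)^k\pmod{p}$. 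This outline is sound, and the bootstrap you describe for the converse does work: at each stage one only needs \emph{some} exponent $N_i$ for \emph{some} choice of lift $m_i$, after which $p^{c}M$ is $\Theta$-stable, so all later $(\Theta)_n(m)$ stay in $p^cM$. For that reason I would push back on your diagnosis of where $H^{-1}(\mathbf{F}_p\otimes^{\mathrm{L}}M)=M[p]$ enters: ``independence of the choices of $m_1,m_2,\dots$'' is not the issue. The condition on $M[p]$ is needed, rather, to make the translation between ``$\rho(m)$ lies in the completed tensor product $M\widehat{\otimes}\O(\mathbf{G}_m^{\#})$'' and ``$(\Theta)_n(m)\in p^cM$ for $n$ large depending on $c$'' legitimate when $M$ has $p$-torsion (completions of direct sums of non-separated modules are delicate), and to run the easy direction on the subcomodule $M[p]$, where an element can lie in every $p^cM$ without being small. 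You have correctly flagged this torsion bookkeeping as the technical heart; as written it remains a gap in the sketch rather than a completed step, but the approach itself is the right one.
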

\begin{lemma}\label{derivation}
    We have 
    \begin{displaymath}
     D_{\mathrm{qc}}((\mathbf{A}_{-}^1/\mathbf{G}_a^{\mathrm{\sharp}}\rtimes \mathbf{G}_m)_{\O_K})\simeq D_{\text{gr,$D$-nilp}}(\O_K\{u,D\}/(Du-uD-1)),  
    \end{displaymath}
    where $\deg(u)=-1$ (as above) and $\deg(D)=+1$.
\end{lemma}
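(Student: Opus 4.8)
The plan is to compute $D_{\mathrm{qc}}$ of the quotient stack by peeling off the two factors of the semidirect product in turn, and then to match the resulting equivariance data with the two generators $u$ and $D$ of the Weyl algebra. Throughout I write $\mathbf{G}_a=\operatorname{Spec}\mathbf{Z}[u]$ with $\deg(u)=-1$, so that a $G:=\mathbf{G}_a^{\#}\rtimes\mathbf{G}_m$-equivariant object amounts to a compatible pair of $\mathbf{G}_m$- and $\mathbf{G}_a^{\#}$-equivariant structures.

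First I would handle the $\mathbf{G}_m$-quotient. Since $\mathbf{G}_a$ is affine and $\mathbf{G}_m$ acts linearly through the grading $\deg(u)=-1$, descent along the $\mathbf{G}_m$-torsor $\mathbf{G}_a\to[\mathbf{G}_a/\mathbf{G}_m]$ identifies $D_{\mathrm{qc}}([\mathbf{G}_a/\mathbf{G}_m])$ with the derived category of graded $\mathbf{Z}[u]$-modules; this is the usual dictionary between $\mathbf{G}_m$-equivariant complexes on an affine scheme and graded complexes, which I would simply invoke.

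Next I would incorporate $\mathbf{G}_a^{\#}$. As $\mathbf{G}_a^{\#}\subseteq\mathbf{G}_a$ acts on $\mathbf{G}_a$ by translation, a $\mathbf{G}_a^{\#}$-equivariant structure on a quasi-coherent sheaf is a comodule structure over the Hopf algebra $\mathcal{O}(\mathbf{G}_a^{\#})=\mathbf{Z}\langle t\rangle$, the free divided-power algebra on the primitive generator $t$ dual to the translation coordinate (so $\deg(t)=-1$ and $\Delta(t^{[n]})=\sum_{i+j=n}t^{[i]}\otimes t^{[j]}$). Writing a coaction as $m\mapsto\sum_n D_n(m)\otimes t^{[n]}$, coassociativity forces $D_iD_j=D_{i+j}$, hence $D_n=D^n$ for the single operator $D:=D_1$; moreover the coaction lands in the uncompleted tensor product precisely when $D$ is locally nilpotent, which is exactly the ``$D$-nilp'' condition. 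The generator $D$ is dual to $t$ and so has degree $+1$, while compatibility of the coaction with the $\mathbf{Z}[u]$-module structure, under the translation $u\mapsto u+t$, yields by the Leibniz rule the relation $Du-uD=1$.

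Assembling the two steps — the $\mathbf{G}_m$-action on $\mathbf{G}_a^{\#}$ is what makes $t$, and hence $D$, homogeneous, so that the comodule structure is compatibly graded — I obtain that $D_{\mathrm{qc}}([\mathbf{G}_a/\mathbf{G}_a^{\#}\rtimes\mathbf{G}_m]_{\mathbf{Z}})$ is the derived category of graded $\mathbf{Z}[u]$-modules equipped with a locally nilpotent operator $D$ of degree $+1$ satisfying $Du-uD=1$, that is, $D_{\mathrm{gr},D\text{-nilp}}(\mathbf{Z}\{u,D\}/(Du-uD-1))$. The abelian version of this identification is \cite[Prop.\ 5.3.7]{Bhatt}, which I would cite for the hearts. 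The main obstacle is the derived enhancement: one must check that equivariance for the formal group $\mathbf{G}_a^{\#}$ is cohomologically well-behaved, so that the comodule (cobar) description actually computes $D_{\mathrm{qc}}$ with no higher coherence obstructions, and that the $D$-nilpotent modules form exactly the essential image. I expect this to reduce to flat descent, since $\mathcal{O}(\mathbf{G}_a^{\#})=\mathbf{Z}\langle t\rangle$ is free over $\mathbf{Z}$ and $\mathbf{G}_a^{\#}$ is the increasing union of its finite-free PD-truncations, the only genuine content being the stability of the nilpotence condition.
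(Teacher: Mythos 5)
Your proposal is correct and follows essentially the same route as the paper: unwind the $\mathbf{G}_a^{\#}$-coaction over the divided-power Hopf algebra to get a locally nilpotent $D$ with $[D,u]=1$, and use $\mathbf{G}_m$-equivariance together with the semidirect-product compatibility to get the grading with $\deg(u)=-1$, $\deg(D)=+1$ (the paper treats the two factors in the opposite order, which is immaterial). Note that the paper explicitly sidesteps the derived enhancement you flag as the main obstacle, stating that it only works at the abelian level, so your level of detail matches what is actually proved.
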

Note that this implies a similar equivalence over $\Spf(\O_K)$ by restricting to $p$-complete objects on the RHS and requiring that $D$ is locally nilpotent mod $p$. 
\begin{proof}[Proof of the result at the abelian level]
We first show that given a $\O_K[u]=\O(\mathbf{A}_{-}^1)$-module $M$, the datum of an equivariant action of $\mathbf{G}_a^{\mathrm{\sharp}}$ on $M$ is equivalent to the datum of a locally nilpotent endomorphism $D: M\to M$ satisfying $Du-uD=E'(\pi)$. As in \cite[Proposition 2.4.4]{Bhatt}, giving a coaction $\mu: M\to M\otimes_{\O_K}\O(\mathbf{G}_a^{\mathrm{\sharp}})$ amounts to giving a locally nilpotent operator $D: M\to M$: given $D$, the corresponding coaction is $m\mapsto \sum_{i\geq 0}D^i(m)a^i/i!$. We check that the coaction is equivariant, or equivalently, it is linear over the ring map $\mu: \O_K[u]\to  \O_K[u]\otimes_{\O_K}\O(\mathbf{G}_a^{\mathrm{\sharp}}), u \mapsto u+E'(\pi)a$ if and only if $d$ satisfies $Du-uD=E'(\pi)$. For this, we compute
    \begin{displaymath}
        \mu(um)=\sum_{i\geq 0}D^i(um)a^i/i!,
    \end{displaymath}
    and 
    \begin{displaymath}
        \mu(u)\mu(m)=\sum_{i\geq 0}(u+E'(\pi)a)D^i(m)a^i/i!.
    \end{displaymath}
     By comparing the coefficients of $a^i/i!$, one deduces that $\mu(um)=\mu(u)\mu(m)$ if and only if $D^iu-uD^i=E'(\pi)iD^{i-1}$ for all $i\geq 1$ if and only if $Du-uD=E'(\pi)$, as claimed. 
It remains to incorporate a $\mathbf{G}_m$-action. Recall that giving a coaction $\mu_{\lambda}: M\to M\otimes_{\O_K} \O(\mathbf{G}_m)=M[\lambda^{\pm 1}]$ is the same as giving a grading $M=\bigoplus_n M^n$: $\mu_{\lambda}(m)=m\lambda^n$ for $m\in M^n$. Moreover, it is compatible with the action of $\mathbf{G}_m$ on $\mathbf{G}_a$ if and only if $u:M\to M$ is homogeneous of degree $-1$. Thus, we need to show that the two actions of $\mathbf{G}_a^{\mathrm{\sharp}}$ and $\mathbf{G}_m$ extend to an action of $\mathbf{G}_a^{\mathrm{\sharp}}\rtimes \mathbf{G}_m$ if and only if $D$ is homogeneous of degree $+1$. 

        The compatibility of the two actions is precisely the condition that $h(n(h^{-1}m))=(h\cdot n)(m)$ for all $h\in \mathbf{G}_m, n\in\mathbf{G}_a^{\mathrm{\sharp}}, m\in M$ and $h\cdot n$ denotes the action of $h$ on $n\in \mathbf{G}_a^{\mathrm{\sharp}}$. We can express the map $(h,n,m)\mapsto h(n(h^{-1}m))$ as 
        \begin{align*}
            \mathbf{G}_m\times \mathbf{G}_a^{\mathrm{\sharp}}\times M &\to \mathbf{G}_m\times \mathbf{G}_m\times \mathbf{G}_a^{\mathrm{\sharp}}\times M\to \mathbf{G}_m\times \mathbf{G}_a^{\mathrm{\sharp}}\times M\to \mathbf{G}_m\times M\to M\\
            (h,n,m) &\mapsto ((h,h^{-1}),n,m)\mapsto (h,n,h^{-1}(m))\mapsto (h,n(h^{-1}m))\mapsto h(n(h^{-1}m)).
        \end{align*}
        Translating in terms of coactions, this is given by the composition 
        \begin{align*}
            M &\to M\otimes \O(\mathbf{G}_m)\to M\otimes \O(\mathbf{G}_a^{\mathrm{\sharp}})\otimes\O(\mathbf{G}_m)\to M\otimes\O(\mathbf{G}_a^{\mathrm{\sharp}})\otimes \O(\mathbf{G}_m)\otimes \O(\mathbf{G}_m)\to M\otimes \O(\mathbf{G}_a^{\mathrm{\sharp}})\otimes \O(\mathbf{G}_m)\\
            m\in M^n &\mapsto m\otimes \lambda^n\mapsto \sum_{i}D^i(m)a^i/i! \otimes \lambda^n\mapsto \sum_i \mu_{\lambda'}(D^i(m))a^i/i!\otimes \lambda^n\mapsto \sum_i (\mu_{\lambda'}(D^i(m))|_{\lambda':=\lambda^{-1}})a^i/i!\otimes \lambda^n,
        \end{align*}
        where as above $\mu_t: M\to M[\lambda^{\pm 1}]$ records the action of $\mathbf{G}_m$ on $M$ and $\lambda':=\lambda^{-1}$. On the other hand, in terms of coactions, the map $(h,n,m)\mapsto (h\cdot n)(m)$ is given by the composition 
        \begin{align*}
            M\to M\otimes \O(\mathbf{G}_a^{\mathrm{\sharp}})\to M\otimes \O(\mathbf{G}_a^{\mathrm{\sharp}})\otimes \O(\mathbf{G}_m)\\
            m\mapsto \sum_i D^i(m)a^i/i!\mapsto \sum_i D^i(m)(\lambda^{-1}a)^i/i!. 
        \end{align*}
        Again by comparing coefficients of $a^i/i!$, we see that the two composition maps agree if and only if 
        \begin{displaymath}
            \lambda^n\mu_{\lambda'}(D^i(m))|_{\lambda':=\lambda}=\lambda^{-i}D^i(m)\Longleftrightarrow \mu_{\lambda}(D^i(m))=\lambda^{n+i}D^i(m)\in M[\lambda^{\pm 1}]
        \end{displaymath}
        for all $m\in M^n$. This happens if and only if $D^i(m)\in M^{n+i}$(recall that $M^n=\mu_{\lambda}^{-1}(M\lambda^n)$), i.e. $D$ is of homogeneous degree $+1$, as claimed. 
\end{proof}

\begin{lemma}[Identifying the Sen operator]\label{identify Sen}
Consider the open immersion
\begin{displaymath}
    j_{\mathrm{HT}}: \O_K^{{HT}}\simeq B\mathrm{Stab}_{\mathbf{G}_a^{\mathrm{\sharp}}\rtimes \mathbf{G}_m}(1)\simeq (\mathbf{G}_m/\mathbf{G}_a^{\mathrm{\sharp}}\rtimes \mathbf{G}_m)\hookrightarrow (\mathbf{A}_{-}^1/\mathbf{G}_a^{\mathrm{\sharp}}\rtimes \mathbf{G}_m)\simeq (\O_K^{\N})_{t=0}.
\end{displaymath}
Let $E$ be a quasi-coherent sheaf on $(\O_K^{\N})_{t=0}$. Let $M$ be the graded $\O_K\{u,D\}/(Du-uD-E'(\pi))$-module corresponding to $E$ under the identification in Lemma \ref{derivation}. By the Rees construction, $M$ corresponds to an increasing filtration $\mathrm{Fil}_{\bullet}$ (with transition maps $u: \mathrm{Fil}_i\to\mathrm{Fil}_{i+1}$) of $p$-complete $\mathbf{Z}_p$-modules together with a map $D:\mathrm{Fil}_{\bullet}\to \mathrm{Fil}_{\bullet}[-1]$ satisfying $Du-uD=E'(\pi)$. (Explicitly, $\mathrm{Fil}_{i}=M^{\deg=-i}$\footnote{Recall again our convention that $\deg(u)=-1$.}.) 

Then, under the identification in Theorem \ref{BL Sen}, the restriction $E|_{\O_K^{{HT}}}$ corresponds to the $\O_K$-module given by the underlying non-filtered module $\varinjlim_{i} \mathrm{Fil}_i$ together with the Sen operator given by $\Theta=(uD-E'(\pi)i)$ on $\mathrm{Fil}_i$.
\end{lemma}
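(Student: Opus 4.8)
The plan is to compute the pullback $\eta^*(E|_{\mathbf{Z}_p^{\mathrm{HT}}})$ together with its $\mathbf{G}_m^{\#}$-coaction directly from the $\mathbf{G}_a^{\#}\rtimes\mathbf{G}_m$-equivariant structure recorded by the pair $(u,D)$, and then to read off the Sen operator via the recipe for $\Theta_M$ given just before Theorem \ref{BL Sen}. As in the proof of Lemma \ref{derivation}, I will work only at the abelian level.

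\textbf{Step 1 (the underlying module).} The open immersion $j_{\mathrm{HT}}$ is the locus where $u$ is invertible, so under the dictionary of Lemma \ref{derivation} restriction along $j_{\mathrm{HT}}$ is localization $M\rightsquigarrow M[u^{-1}]$. By the Cartesian square \eqref{cartesian}, the cover $\eta$ corresponds to the point $1\in\mathbf{G}_m\subseteq\mathbf{G}_a$, so $\eta^*$ is the fiber at $u=1$, i.e. $M[u^{-1}]/(u-1)$. Since $u$ has degree $-1$ and is now invertible, multiplication by $u$ identifies all graded pieces, and this fiber is canonically $\varinjlim_i\mathrm{Fil}_i$ with the given transition maps $u\colon\mathrm{Fil}_i\to\mathrm{Fil}_{i+1}$, as asserted.

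\textbf{Step 2 (the coaction).} By the stabilizer identification $\mathbf{G}_m^{\#}\xrightarrow{\sim}\mathrm{Stab}_{\mathbf{G}_a^{\#}\rtimes\mathbf{G}_m}(1)$, $t\mapsto(1-t^{-1},t)$, established above, the descent datum for $E|_{\mathbf{Z}_p^{\mathrm{HT}}}$ along $\eta$ is exactly the action of $(1-t^{-1},t)$ on $N:=\varinjlim_i\mathrm{Fil}_i$. Combining the $\mathbf{G}_m$-action (acting by $t^{\deg}$, hence by $t^{-i}$ on $\mathrm{Fil}_i=M^{-i}$) with the $\mathbf{G}_a^{\#}$-action (acting by $a\mapsto e^{aD}=\sum_k\tfrac{a^k}{k!}D^k$, per Lemma \ref{derivation}) gives, for $m\in\mathrm{Fil}_i=M^{-i}$ and $a=1-t^{-1}$, the formula $(1-t^{-1},t)\cdot m=t^{-i}\sum_k\tfrac{a^k}{k!}D^k(m)$. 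The one point requiring care is that $D^k(m)\in\mathrm{Fil}_{i-k}$ sits in a lower filtered piece; to express its class in $N$ I transport it upward along the transition maps, using that $u\equiv 1$ on $N$ to rewrite $[D^k(m)]_N=[u^kD^k(m)]_N$ with $u^kD^k(m)\in\mathrm{Fil}_i$. Thus the $\mathbf{G}_m^{\#}$-coaction is $m\mapsto t^{-i}\sum_k\tfrac{a^k}{k!}u^kD^k(m)$.

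\textbf{Step 3 (extracting $\Theta$).} Setting $t=1+\epsilon$ and reducing modulo $\epsilon^2$ gives $a\equiv\epsilon$ and $t^{-i}\equiv 1-i\epsilon$, so only the $k=0,1$ terms survive and the coaction becomes $m\mapsto m+\epsilon\bigl(uD(m)-i\,m\bigr)$. By the definition of the Sen operator as $(d/dt)|_{t=1}$ of the coaction, this reads off $\Theta=uD-i$ on $\mathrm{Fil}_i$, as claimed. I would also record that $\Theta=uD+\deg$ is genuinely well-defined on $N$: using $[D,u]=1$ one computes $(uD-(i+1))\circ u=u\circ(uD-i)$ on $\mathrm{Fil}_i$, so $\Theta$ is compatible with the transition maps and descends to the colimit. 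The main obstacle is precisely the bookkeeping in Step 2: the degree-shifting operator $D$ does not descend to the ungraded fiber $N$, the correct degree-zero operator being $uD$, and the appearance of the exact power $u^k$ (rather than $(uD)^k$) when transporting $D^k(m)$ into $\mathrm{Fil}_i$, together with the sign of the $\mathbf{G}_m$-weight, is what must be handled precisely to land on $uD-i$ rather than $uD+i$ or $Du-i$. Once the conventions $\deg(u)=-1$, the action $(t,u)\mapsto t^{-1}u$, and the stabilizer embedding $t\mapsto(1-t^{-1},t)$ are fixed as above, the first-order computation is forced and elementary.
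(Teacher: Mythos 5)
Your proposal is correct and follows essentially the same route as the paper: both identify $\eta^*(E|_{\mathbf{Z}_p^{\mathrm{HT}}})$ with $M[1/u]/(u-1)\simeq\varinjlim_i\mathrm{Fil}_i$, use the stabilizer embedding $t\mapsto(1-t^{-1},t)$ to write down the $\mathbf{G}_m^{\#}$-coaction, and extract $\Theta=uD-i$ from the first-order term at $t=1$; your transport of $D^k(m)$ into $\mathrm{Fil}_i$ via $u^k$ is exactly the paper's manipulation $D(m/u^i)=(uD-i)(m)/u^{i+1}\equiv(uD-i)(m)/u^i\bmod(u-1)$, just performed directly in the colimit rather than in $(M[1/u])^{\deg=0}$. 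The only discrepancy is that your coaction formula places the weight factor as $t^{-i}$ rather than $t^{-i+k}$ on the $k$th term (an ordering convention in the semidirect product), but this affects nothing at first order in $\epsilon$, so the computation of the Sen operator is unaffected.
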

\begin{proof}
    We will unwind the various identifications. First, the restriction of $E$ to $[\mathbf{G}_m/\mathbf{G}_a^{\mathrm{\sharp}}\rtimes \mathbf{G}_m]$ corresponds to the graded $\O_K[u,1/u]$-module $M[1/u]$ equipped with the obvious extension of $D$, i.e., $D(m/u^i):=D(m)/u^i-E'(\pi)im/u^{i+1}$\footnote{One checks that $D$ indeed acts locally nilpotently mod $p$; this is related to the fact we have seen above that $\mathbf{G}_a^{\mathrm{\sharp}}\rtimes \mathbf{G}_m$ preserves $\mathbf{G}_m\subseteq \mathbf{G}_a$ and follows again from the fact that we are working in a $p$-complete setting.}). Now the restriction of $E$ to $B(\mathrm{Stab}_{\mathbf{G}_a^{\mathrm{\sharp}}\rtimes \mathbf{G}_m}(1))$ corresponds to the quotient module $N:=M[1/u]/(u-1)$ together with the induced action of the subgroup $\mathrm{Stab}_{\mathbf{G}_a^{\mathrm{\sharp}}\rtimes \mathbf{G}_m}(1)$.

We need to compute the Sen operator on $N$ in terms of the usual identification (from the Rees dictionary)
\begin{align}\label{identification N}
        \varinjlim_i \mathrm{Fil}_i & \simeq (M[1/u])^{\mathrm{deg}=0}\simeq M[1/u]/(u-1)=:N\\
        m\in \mathrm{Fil}_i & \mapsto m/u^i\nonumber.
\end{align}
Recall that $\mathbf{G}_a^{\mathrm{\sharp}}\simeq \mathrm{Stab}_{\mathbf{G}_a^{\mathrm{\sharp}}\rtimes \mathbf{G}_m}(1)$ via $a\mapsto (a,(1-E'(\pi)a)^{-1})$. The induced action of $\mathbf{G}_a^{\mathrm{\sharp}}\simeq \mathrm{Stab}_{\mathbf{G}_a^{\mathrm{\sharp}}\rtimes \mathbf{G}_m}(1)\subseteq \mathbf{G}_a^{\mathrm{\sharp}}\rtimes \mathbf{G}_m$ on $M[1/u]$ is thus given by the composition
\begin{align*}
   M[1/u] &\to M[1/u]\otimes \O(\mathbf{G}_a^{\mathrm{\sharp}}\rtimes \mathbf{G}_m)\xrightarrow{a\mapsto a, \lambda\mapsto (1-E'(\pi)a)^{-1}} M[1/u]\otimes \O(\mathbf{G}_a^{\mathrm{\sharp}})\\
   m  & \mapsto \sum_{i\geq 0}D^i(m)\tfrac{a^i}{i!} (1-E'(\pi)a)^{-(\deg(m)+i)}
\end{align*}
(where recall that $a$ and $\lambda$ respectively denote the coordinates on $\mathbf{G}_a^{\mathrm{\sharp}}$ and $\mathbf{G}_m$). After applying $(d/da)|_{a=0}$ we see that the Sen operator on $M[1/u]$ is given by $\Theta_{M[1/u]}(m)=D(m)+E'(\pi)(\deg(m))m$. In particular for $m/u^i\in (M[1/u])^{\deg=0}$ (so that $m\in \mathrm{Fil}_i$), we have 
\begin{align}\label{unravel theta m}
    \Theta_{M[1/u]}(m/u^i) =D(m/u^i)=(uD-E'(\pi)i)(m)/u^{i+1}\equiv (uD-E'(\pi)i)(m)/u^i\bmod{(u-1)M[1/u]}.
\end{align}
As the formation of the Sen operator is functorial, we have a commutative square 
\begin{displaymath}
\begin{tikzcd}
    M[1/u]\ar[d,twoheadrightarrow] \ar[r,"\Theta_{M[1/u]}"] & M[1/u]\ar[d,twoheadrightarrow,"\bmod{(u-1)}"]\\
    N \ar[r,"\Theta_N"] & N.
\end{tikzcd}
    \end{displaymath}
As $(uD-E'(\pi)i)(m)\in \mathrm{Fil}_i$, it follows from \eqref{unravel theta m} that, via the identification \eqref{identification N}, the Sen operator on $N$ is given by $\Theta_N=uD-E'(\pi)i$ on $\mathrm{Fil}_i$, as desired. (Equivalently, $\Theta_N=uD$ under the identification $N\simeq (M[1/u])^{\mathrm{deg}=0}$; this is the description used in \cite[\textsection 6.5.4, second bullet point]{Bhatt}.) 
\end{proof}
\subsection{Identifying the Nygaard filtration}
Consider again the flat cover
\begin{align*}
    \pi_{\O_K}: \R(E(x)^{\bullet}\fkS)\simeq \Spf(W(k)[[x]][u,t]/(ut-E(x)))/\mathbf{G}_m & \xrightarrow{}  \O_K^{\N}.
\end{align*}
\begin{lemma}\label{nygaard filtration stack}
    The filtration over $E(x)^{\bullet}\fkS$ associated (via the Rees dictionary) to the pullback $\pi_{\O_K}^*E$ is precisely the Nygaard filtration $\mathrm{Fil}^{\bullet}\varphi^*\fM:=\varphi^*\fM\cap E(u)^{\mathbf{Z}}\fM$ on $\varphi^*\fM$. 
\end{lemma}
\begin{proof}
We first check that the non-filtered module underlying the filtration $\pi_{\O_K}^*E$ is indeed $\varphi^*\fM$. To see this, note that the restriction of $\pi_{\O_K}: \R(E(x)^{\bullet}\fkS)\to \O_K^{\N}$ to the open locus $j_{dR}(\O_K^{\Prism})=(\O_K^{\N})_{t\ne 0}$ identifies the composition
\begin{displaymath}
\begin{tikzcd}
    \Spf(\fkS)\xrightarrow{F\circ\rho_{(\fkS,I)}} \O_K^{\Prism}\ar[r,hook,"j_{\mathrm{dR}}"] &\O_K^{\N}.
    \end{tikzcd}
\end{displaymath}
(Note the Frobenius twist!) This follows easily by unraveling the various constructions. This implies the claim since restricting to the open $\{t\ne 0\}$ amounts to passing to the underlying non-filtered module.

Consider now the commutative square (arising from the map of prisms $(\fkS,E(x))\to (A_{\inf},\xi), u\mapsto [\pi^{\flat}]$)
\begin{displaymath}
    \begin{tikzcd}
        \R(\xi^{\bullet}A_{\inf})\ar[d]\ar[r,"\pi_{\O_C}"] & \O_C^{\N}\ar[d]\\
        \R(E(x)^{\bullet}\fkS)\ar[r,"\pi_{\O_K}"] & \O_K^{\N}. 
    \end{tikzcd}
\end{displaymath}
By Lemma \ref{useful lemma identify filtration} above, the pullback $\pi_{\O_C}^*(E|_{\O_C^{\N}})$ corresponds to the (honest) filtration $\mathrm{Fil}^{\bullet}(\varphi^*M_{\inf}):=\varphi^*M_{\inf}\cap E(u)^{\mathbf{Z}}M_{\inf}$ on $\varphi^*M_{\inf}$, where $M_{\inf}:=\fM\otimes_{\fkS}A_{\inf}$. As the map $\fkS\to A_{\inf}$ is (classically) faithfully flat, we have a natural identification $\mathrm{Fil}^{\bullet}(\varphi^*M_{\inf})\simeq \mathrm{Fil}^{\bullet}(\varphi^*\fM)\otimes_{\fkS}A_{\inf}$. In summary, we have shown that the filtration given by $\pi^*E$ is an honest filtration on $\varphi^*\fM$, which, after base change along the faithfully flat map $\fkS\to A_{\inf}$, agrees with the filtration $\mathrm{Fil}^{\bullet}(\varphi^*\fM)\otimes_{\fkS}A_{\inf}$ on $\varphi^*\fM\otimes_{\fkS} A_{\inf}$. Hence it must agree with the filtration $\mathrm{Fil}^{\bullet}\varphi^*\fM$, as wanted. 
\end{proof}
\begin{cor}\label{conjugate filtration}
    The increasing filtration corresponding to the pullback of $E$ under the map 
    \begin{displaymath}
     (\mathbf{A}_{-}^1/\mathbf{G}_m)\to (\mathbf{A}_{-}^1/\mathbf{G}_a^{\mathrm{\sharp}}\rtimes \mathbf{G}_m)_{\O_K}\xrightarrow[\simeq]{\pi_{\O_K}} (\O_K^{\N})_{t=0}
    \end{displaymath}
    is precisely the conjugate filtration
    \begin{displaymath}
    \mathrm{Fil}_{\bullet}^{\mathrm{conj}}\fM_{HT}: \quad \ldots\hookrightarrow \underbrace{\fil{i-1}\varphi^*\fM/\fil{i}\varphi^*\fM}_{\mathrm{Fil}_{i-1}^{\mathrm{conj}}}\xhookrightarrow{\times E(x)} \underbrace{\fil{i}\varphi^*\fM/\fil{i+1}\varphi^*\fM}_{\mathrm{Fil}_i^{\mathrm{conj}}}\xhookrightarrow{} \ldots.
\end{displaymath}
\end{cor}
\begin{proof}
We have a commutative (even cartesian) diagram 
\begin{displaymath}
    \begin{tikzcd}
        \R(E(x)^{\bullet}\fkS)\simeq \Spf(W(k)[[x]][u,t]/(ut-E(x)))/\mathbf{G}_m\ar[r,"\pi_{\O_K}"] & \O_K^{\N}\\
        {(\mathbf{A}_{-}^1/\mathbf{G}_m)}_{\O_K}\ar[u,hook,"t=0"]\ar[r] & (\O_K^{\N})_{t=0}\ar[u,hook].
    \end{tikzcd}
\end{displaymath}
Since the conjugate filtration is by definition the associated graded of the $E(x)^{\bullet}\fkS$-filtration $\mathrm{Fil}^{\bullet}\varphi^*\fM$, the result follows from Lemma \ref{nygaard filtration stack} because restricting to the closed $\{t=0\}$ amounts to passing to the associated graded.  
\end{proof}
\subsection{Identifying the Hodge filtration}\label{subsection de Rham}
\begin{lemma}\label{de Rham commute}
Let $X$ be a bounded $p$-adic formal scheme. THen for any object $(A,I)\in X_{\Prism}$, the diagram 
    \begin{displaymath}
        \begin{tikzcd}
       \Spf(A/I)\times B\mathbf{G}_m\ar[d,hook,"u=0"]\ar[r,hook,"t=0"] & \Spf(A/I)\times \mathbf{A}_{+}^1/\mathbf{G}_m\ar[d,hook,"u=0"]\ar[r] & X\times \mathbf{A}_{+}^1/\mathbf{G}_m\ar[d,"i_{dR}"]\\
       \Spf(A/I)\times\mathbf{A}_{-}^1/\mathbf{G}_m\simeq \R(I^{\bullet}A)_{t=0} \ar[r,hook] &  \R(I^{\bullet}A)\ar[r,"\pi_X"] & X^{\N}
        \end{tikzcd}
    \end{displaymath}
    commutes. Here the left vertical map is induced by the map of filtered rings $I^{\bullet}A\to A/I$ where the target has the trivial filtration; and the right vertical map is the de Rham map from \cite[Construction 5.3.13]{Bhatt}.
\end{lemma}
\begin{proof}
Commutativity of the left square is clear. We now show commutativity of the right square after further composing with the map $X^{\N}\to\mathbf{Z}_p^{\N}$; the rest of the proof is left to the reader. Let $S$ be a $p$-nilpotent test $A/I$-algebra and let $t: L\to S$ be an $S$-point of $\Spf(A/I)\times \mathbf{A}^1/\mathbf{G}_m$. In terms of the usual moduli description of $\R(I^{\bullet}A)$, the image of $t$ under the left vertical map corresponds to the factorization $(I\otimes_AS\xrightarrow{u=0}L\xrightarrow{t}S)$ of the natural map. Then by construction of $\pi_X$ (see diagram \eqref{big diagram pi_X general}), the image of $t$ under $\Spf(A/I)\times \mathbf{A}^1/\mathbf{G}_m\to \R(I^{\bullet}A)\xrightarrow{\pi_X}X^{\N}\to\mathbf{Z}_p^{\N}$ is the filtered Cartier--Witt divisor 
\begin{displaymath}
 V(L)^{\mathrm{\sharp}}\oplus (I\otimes_A F_*W)\xrightarrow{(t^{\mathrm{\sharp}},V\circ\beta)} W,   
\end{displaymath}
where as before $\beta$ is the isomorphism fitting into
\begin{displaymath}
    \begin{tikzcd}
            I\otimes_A W\ar[r,"F"]\arrow[bend right=20,red,"can"]{rrr} & I\otimes_A F_*W\ar[r,"\beta","\simeq"',blue] & F_*W\ar[r,hook,"V"] & W.
        \end{tikzcd}
\end{displaymath}
Thus $V(L)^{\mathrm{\sharp}}\oplus (I\otimes_A F_*W)\xrightarrow{(t^{\mathrm{\sharp}},V\circ\beta)} W$ identifies with $V(L)^{\mathrm{\sharp}}\oplus F_*W\xrightarrow{(t^{\mathrm{\sharp}},V)}W$ as filtered Cartier--Witt divisors. We are done since by definition of the de Rham map, the latter is precisely the image of $t$ under the composition $\Spf(A/I)\times \mathbf{A}^1/\mathbf{G}_m\to X\times \mathbf{A}^1/\mathbf{G}_m\xrightarrow{i_{dR}}X^{\N}\to\mathbf{Z}_p^{\N}$. 
\end{proof}
We apply this for $X=\Spf(\O_K)$ and $(A,I)=(\fkS,(E(x))$, our fixed Breuil--Kisin prism: 
\begin{cor}\label{identfy Hodge filtration}
    The pullback of $E$ under the de Rham map 
    \begin{displaymath}
        (\mathbf{A}_{+}^1/\mathbf{G}_m)_{\O_K}\xrightarrow{i_{dR}}\O_K^{\N}
    \end{displaymath}
    corersponds (via the Rees dictionary) to the Hodge filtration $\fil{\bullet}_{H}\fM_{\mathrm{dR}}$. Moreover, there is a natural graded isomorphism 
    \begin{align*}
        \gr_{\bullet}^{\mathrm{conj}}\fM_{{HT}}\simeq \mathrm{gr}_H^{\bullet}\fM_{\mathrm{dR}}.
    \end{align*}
\end{cor}
\begin{proof}
    This follows from Lemma \ref{de Rham commute} and Lemma \ref{nygaard filtration stack} since the Hodge filtration is by definition the image of the Nygaard filtration on $\varphi^*\fM$ under the natural map $\varphi^*\fM\to \varphi^*\fM/E(x)\varphi^*\fM=\fM_{\mathrm{dR}}$.
\end{proof}
\section{Relation with the classical theory}\label{section GaoLiu}
We finish by briefly indicating a more explicit construction of the operators $D$ and $\Theta$, following the work \cite{GaoLiu} by Gao--Liu. We refer the reader to \textit{loc. cit.} for additional details.

By the work of Kisin, given a Breuil--Kisin module $\fM$ coming from a crystalline Galois lattice, there is a canonical monodromy operator $N: \fM\otimes_{\fkS}\O\to \fM\otimes_{\fkS}\O$ over the derivation $N_{\nabla}:=\lambda \tfrac{d}{dx}$\footnote{Kisin considers instead the derivation $\lambda x\tfrac{d}{dx}$ (with an additional factor $x$) to accommodate the case of semistable representations, but this will not concern us.} on $\O$. Here as usual $\O$ denotes the ring of functions on the rigid open unit disk over $K_0$ (in the coordinate $x$), and $\lambda\in \O$ denotes the element $\prod_{n\geq 0}\varphi^n(E(x)/E(0))$. The construction of $N$ however only uses $\fM[1/p]$, so one may ask if it can be actually defined over an integral variant of $\O$. By the Dwork's trick, $\varphi$-modules over $\O$ extends uniquely to $\fkS\langle E(x)/p\rangle [1/p]$ (the ring of functions on the closed disc $\{|x|\leq |\pi|\}$) so we can also equivalently consider $N$ as being defined on $\fM\otimes_{\fkS}\fkS\langle E(x)/p\rangle [1/p]$ and linear over the derivation $N_{\nabla}:=E(x)\tfrac{d}{dx}$ on the coefficient ring. Note that there is now an obvious integral candidate, namely $S_{\max}:=\fkS\langle E(u)/p\rangle$, and one can ask if $N$ in fact extends to $\fM\otimes_{\fkS} S_{\max}$. By exploiting integral properties of the $G_K$-action on $\fM\otimes_{\fkS}A_{\inf}$, it is shown in \cite{GaoLiu} that this is indeed the case. (In \cite{bartlettCycle}, Bartlett also proves this result using similar arguments.) 

We claim that after extending scalars along the evalation map $\mathrm{ev}_{\pi}: S_{\max}\twoheadrightarrow \O_K$, $N$ recovers our Sen operator $\Theta$ on $M:=\fM_{HT}:=\fM/E(x)\fM$. To see this, recall that the map $\Spf(\O_K)\xrightarrow{\overline{\rho}_{(\fkS,I)}}\O_K^{{HT}}$ induces an isomorphism $B\mathbf{G}_a^{\mathrm{\sharp}}\simeq \O_K^{HT}$, and our $\Theta$ is then defined as the composition 
\begin{align*}
    \Theta_{M}: M\to M\widehat{\otimes}_{\O_K} \O(\mathbf{G}_a^{\mathrm{\sharp}})\xrightarrow{(d/da)|_{a=0}}M\otimes \O_K\simeq M,
\end{align*}
where the first map is the associated coaction map. On the other hand, similar to \cite[Prop. 3.2.8]{BhattLurieabsolute}, one can show that the natural diagram of stacks 
\begin{displaymath}
    \begin{tikzcd}
        \Spf(\fkS^{(1)}/I)\ar[d,"i_1"']\ar[r,"i_2"] & \Spf(\O_K)\ar[d,"\overline{\rho}_{(\fkS,I)}"]\\
        \Spf(\O_K)\ar[r,"\overline{\rho}_{(\fkS,I)}"] & \O_K^{HT}
    \end{tikzcd}
\end{displaymath}
is cartesian, where $\fkS^{(1)}$ denotes the self-coproduct of $(\fkS,I)$ as an object in the absolute prismatic site $(\O_K)_{\Prism}$. Thus, there is an identification $\fkS^{(1)}/I\simeq \O(\mathbf{G}_a^{\mathrm{\sharp}})= \widehat{\bigoplus}_{n\geq 0}\O_K\tfrac{a^n}{n!}$, and our $\Theta$ is also given by the composition 
\begin{align*}
   M\xrightarrow{can}M\otimes_{\O_K,i_1}(\O_K^{(1)}/I)\simeq M\otimes_{\O_K,i_2}(\O_K^{(1)}/I)=\widehat{\bigoplus}_{n\geq 0}M\tfrac{a^n}{n!}\xrightarrow{proj}Ma\simeq M,
\end{align*}
with the middle isomorphism being induced by the descent datum $\fM\otimes_{\fkS,i_1}\fkS^{(1)}\simeq \fM\otimes_{\fkS,i_2}\fkS^{(1)}$. This is what is called the ``prismatic Sen operator'' in \cite{GaoLiu}, and it is shown in Proposition 9.8 of \textit{loc. cit.} that this indeed agrees with the base change of Kisin's operator $N: \fM\otimes_{\fkS}S_{\max}\to \fM\otimes_{\fkS}S_{\max}$ along the map $ev_{\pi}: S_{\max}\twoheadrightarrow \O_K$, as claimed. 
\begin{remark}
    An advantage of the stacky approach is that the construction of $D$ and $\Theta$ works uniformly in the uniformizer $\pi$. In contrast, the argument in \cite{GaoLiu} (which makes crucial use of the Galois action on $\fM\otimes_{\fkS}A_{\inf}$) requires some additional care in the case $p=2$ (due to the usual issue that $\pi^{1/p}$ may belong to $K(\zeta_{p^{\infty}})$ in this case). 
\end{remark}
Above we have used Kisin's operator $N_{\nabla}$, but one can also use the monodromy operator in Breuil's theory to construct $\Theta$ (or equivalently $D$). We refer the reader to \cite[Lem. 2.3]{liuNygaard} for more details.

\medskip
\addcontentsline{toc}{section}{References}

\textsc{\small CNRS, IMJ-PRG, Sorbonne Universit\' e, 4 place Jussieu, 75005 Paris, France}\\
\indent\textit{Email address}: \href{mailto:phamd@imj-prg.fr}{\texttt{dat.pham@imj-prg.fr}}

\begin{thebibliography}{99}

\bibitem{arthurSenHodgeTate}
Johannes Ansch\" utz, Arthur-C\' esar Le Bras, and Ben Heuer, \textit{$v$-vector bundles on $p$-adic fields and Sen theory via the Hodge-Tate stack}, 2022. 

\bibitem{bartlettCycle}
Robin Bartlett, \textit{Cycles relations in the affine grassmannian and applications to Breuil--Mézard for G-crystalline representations}, 2023. 


\bibitem{BhattLurieabsolute}
Bhargav Bhatt and Jacob Lurie, \textit{Absolute prismatic cohomology}, 2022. 


\bibitem{Bhatt}
Bhargav Bhatt, \textit{Prismatic $F$-gauges}, 2022. Available at \href{https://www.math.ias.edu/~bhatt/teaching/mat549f22/lectures.pdf}{https://www.math.ias.edu/~bhatt/teaching/}
\bibitem{drinfeldprismatization}

Vladimir Drinfeld, \emph{Prismatization}, 2020. 

\bibitem{drinfeldQuasi-ideal}

Vladimir Drinfeld, \emph{On a notion of ring groupoid}, 2021. 

\bibitem{GaoLiu}
Hui Gao and Tong Liu, \textit{Integral Sen theory and integral Hodge filtration}, 2024. 

\bibitem{GLSunitary}
Toby Gee, Tong Liu, and David Savitt, \textit{The Buzzard-{D}iamond-{J}arvis conjecture for unitary groups}, 2014.
\bibitem{liuNygaard}
Tong Liu, \textit{Torsion graded pieces of Nygaard filtration for crystalline representation}, 2024.


\bibitem{BLprismatization}
Bhargav Bhatt and Jacob Lurie, \textit{The prismatization of $p$-adic formal schemes}, 2022.

\bibitem{prismatic}
Bhargav Bhatt and Peter Scholze, \textit{Prismatic $F$-crystals and crystalline Galois representations}, 2023.
\end{thebibliography}
\end{document}